\newtheorem{theorem}{Theorem}[section]
\newtheorem{proposition}[theorem]{Proposition}
\newtheorem{lemma}[theorem]{Lemma}
\newtheorem{corollary}[theorem]{Corollary}
\newcommand{\ind}[1]{\mathbf{1}_{\{#1\}}}
\newcommand{\pr}[1]{\operatorname{\mathbf{Pr}}\left[#1\right]}
\newcommand{\prcond}[2]{\operatorname{\mathbf{Pr}}\left[#1\;\middle\vert\;#2\right]}
\newcommand{\E}[1]{\operatorname{\mathbf{E}}\left[#1\right]}
\newcommand{\Espace}[2]{\operatorname{\mathbf{E}}_{#2}\left[#1\right]}
\newcommand{\vol}[1]{|#1|}
\newcommand{\critperc}{\mathrm{c}}
\newcommand{\compl}{\textrm{c}}
\newcommand{\tcov}{T_\mathrm{cov}}
\newcommand{\tdet}{T_\mathrm{det}}
\newcommand{\tperc}{T_\mathrm{perc}}
\newcommand{\tbroad}{T_\mathrm{bc}}
\def\rset{{\mathbb{R}}}
\def\zset{{\mathbb{Z}}}
\newcommand{\ignore}[1]{}
\newcommand{\topic}[1]{\par\medskip\noindent{\bf #1}\par\smallskip\noindent}
\begin{document}

\title{Mobile Geometric Graphs, and Detection and Communication Problems in Mobile Wireless Networks}

\author{Alistair Sinclair\thanks{Computer Science Division, University
        of California, Berkeley CA~94720-1776, U.S.A.\ \ Email:
        \hbox{sinclair@cs.berkeley.edu}.
        Supported in part by NSF grant CCF-0635153
        and by a UC Berkeley Chancellor's Professorship.}
\and
        Alexandre Stauffer\thanks{Computer Science Division, University
        of California, Berkeley CA~94720-1776, U.S.A.\ \ Email:
        \hbox{stauffer@cs.berkeley.edu}.
        Supported by a Fulbright/CAPES scholarship and NSF grants CCF-0635153 and 
        DMS-0528488.}
}
\maketitle
\thispagestyle{empty}

\begin{abstract}
Static wireless networks are by now quite well understood mathematically
through the {\it random geometric graph\/} model.  By contrast,
there are relatively few rigorous results on the practically important 
case of {\it mobile\/} networks, in which 
the nodes move over time; moreover, these results often make unrealistic
assumptions about node mobility such as the ability to make very large 
jumps.  In this paper we consider a realistic model for mobile wireless 
networks which we call {\it mobile geometric graphs}, and which is a 
natural extension of the random geometric graph model.  
We study two fundamental questions in this model: {\it detection\/} 
(the time until a given ``target" point---which may be either 
fixed or moving---is detected by the network), and {\it percolation\/} 
(the time until a given node is able to communicate with the giant component
of the network).  
For detection, we show that the probability that the detection time
exceeds~$t$ is $\exp(-\Theta(t/\log t))$ in two dimensions, and 
$\exp(-\Theta(t))$ in three or more dimensions, under reasonable
assumptions about the motion of the target.  For percolation, we show
that the probability that the percolation time exceeds~$t$ is
$\exp(-\Omega(t^\frac{d}{d+2}))$ in all dimensions $d\geq 2$.  We also give a
sample application of this result by showing that the time required to 
broadcast a message through a mobile network with $n$ nodes above the
threshold density for existence of a giant component
is $O(\log^{1+2/d} n)$ with high probability.
\end{abstract}

\newpage
\setcounter{footnote}{0}
\setcounter{page}{1}
\section{Introduction}
A principal focus in wireless network research today is on mobile ad hoc networks, 
in which nodes moving in space 
cooperate to relay packets on behalf of other nodes without any centralized
infrastructure.  Although the {\it static\/} properties of such networks are by now
quite well understood mathematically, the additional challenges posed by
node mobility have so far received relatively little attention from the theory 
community.  In this paper we consider a mathematical model for mobile
wireless networks, which we call {\it mobile geometric graphs\/} and which
is a natural extension of the widely studied {\it random geometric graphs\/}
model of static networks.  We study two fundamental problems in this model:
the {\it detection \/} problem (time until a fixed or moving target is detected by
the network), and the {\it percolation \/} problem (time until a given node is
able to communicate with many other nodes).

In the random geometric graph (RGG) model~\cite{Penrose},
nodes are distributed in a region $S\subseteq\rset^d$ according
to a Poisson point process of intensity~$\lambda$ (i.e., the number of nodes
in any subregion $A\subseteq S$ is Poisson with mean~$\lambda|A|$, where $|A|$
is the volume of~$A$).  Two nodes are connected
by an edge iff their distance is at most~$r$, where the parameter~$r$ is 
the {\it transmission range\/} that specifies the distance over which nodes may send 
and receive information;  since the structure of the RGG depends only on
the product $\lambda|B_r|$ (where $B_r$ is the radius-$r$
ball in~$\rset^d$)~\cite{BR}, we may fix~$r$ so that $|B_r|=1$ and parameterize the
model on~$\lambda$ only.  
We shall take $S$ to be a cube
of volume~$n/\lambda$ (so that the expected number of nodes in~$S$ is~$n$), 
and consider the limiting behavior as $n\to\infty$.

Clearly, increasing $\lambda$ increases the average degree of the nodes.
As is well known, there are two critical values of~$\lambda$
at which the connectivity properties of the 
RGG undergo a significant change.  First there is the {\it percolation
threshold}~$\lambda=\lambda_\critperc$ (a constant that depends on the dimension~$d$),
so that if $\lambda>\lambda_\critperc$ the network w.h.p.\footnote{We shall take the phrase
``w.h.p.'' (``with high  probability'') to mean ``with probability tending
to~1 as $n\to\infty$.''}\ has a unique ``giant'' component containing a  constant 
fraction of the nodes, while if $\lambda<\lambda_\critperc$ all components have size 
$O(\log n)$ w.h.p.~\cite{Penrose}.  Second, at the {\it connectivity threshold\/}
$\lambda=\log n$, the network becomes connected w.h.p.~\cite{GK1}.
The percolation threshold $\lambda=\lambda_\critperc$ occurs also in the infinite-volume
limit where $S=\rset^d$, in which case the giant component is the unique 
infinite component (or ``infinite cluster") with probability~1.  
These and other fundamental properties 
of RGGs are extensively discussed in the book of Penrose~\cite{Penrose};
see also~\cite{GRK} for additional results on thresholds.
There are a host of theoretical results on routing and other algorithmic
questions on static RGGs (see the Related Work section below for a
partial list).  Naturally, most of these consider networks above the
connectivity threshold.

A central feature of many ad hoc networks is the fact that the nodes are
moving in space.  This is the case, for example, in vehicular
networks (where sensors are attached to cars, buses or taxis), surveillance
and disaster recovery applications where mobile sensors are used to 
survey an area, and pocket-switched networks based on mobile communication
devices such as cellphones.  
Such networks are also frequently modeled using RGGs,  augmented by motion 
of the nodes.  We will employ the following model, which we refer to as 
{\it mobile geometric graphs\/} (MGGs) and which is essentially equivalent 
to the ``dynamic boolean model" introduced in~\cite{vandenberg}
in the context of dynamic continuum percolation.   
We begin at time~0 with a 
(infinite)\footnote{Passing to infinite volume is a standard
device that eliminates boundary effects in a finite region; with a little more technical
effort the model and results can be extended to finite regions with a suitable
convention---such as reflection or wraparound---to handle motion of nodes at 
the boundaries.  See, e.g., Corollary~\ref{cor:broadcast} below.}  
RGG~$G_0$ in $S=\rset^d$. Nodes move independently in continuous time 
according to Brownian motion with variance~$s^2$; here $s$ is a {\it range of 
motion\/} parameter, which we assume is constant to ensure a realistic model.
We observe the nodes at discrete time steps
(so the displacement of a node in each direction in each
time step is normally distributed with mean~0 and variance~$s^2$).
It is not hard to verify that this produces a Markovian sequence of RGGs 
$G_0,G_1,\ldots$, all with the same value of~$\lambda$.
Note that, while each $G_i$ is a RGG, there are correlations over time; 
it is this feature that makes mobility challenging to analyze.  

Once mobility is injected, the questions of interest naturally change from those
in the static case.  For example, connectivity no longer plays such a central
role because mobility may allow nodes $u,v$ to  exchange messages even in
the absence of a path between them at any given time: namely, $u$ can route
its message to~$v$ along a time-dependent path, opportunistically using
other nodes to relay the message towards~$v$.  Networks of this kind are often
termed ``delay tolerant networks''~\cite{Fall}.  This allows us to focus not on the
rather artificial connectivity regime (where $\lambda$ grows with~$n$), but instead
on the case where~$\lambda$ (and hence the average degree) is constant.
Keeping $\lambda$ constant is obviously highly 
desirable as it makes the model much more realistic and scalable.

There are rather few rigorous results on wireless networks
with mobile nodes, and those that do exist typically either make unrealistic
assumptions about node mobility (such as unbounded range of
motion~\cite{GT,DGT,Clem} or no change in direction~\cite{Liu}), 
or work in the connectivity regime which, as we have seen, requires
unbounded density or transmission range~\cite{Diaz,EMPS}.  
(See the Related Work section for more details.)  
In this paper we study two fundamental questions
for mobile networks assuming only constant average degree and bounded
range of motion (i.e., constant values of the parameters $\lambda$ and~$s$).
\subsubsection*{Results}
{\bf Detection.}\ \ 
A central issue in surveillance and remote sensing applications
is the ability of the network to detect a ``target''
$u\in S$ (which may be either fixed or moving), in the
sense that there is a node within distance at most~$r$ of~$u$.  
It is well known~\cite{MR} that, for a static RGG, a fixed target can
be detected only with constant probability 
unless the average 
degree grows with~$n$.
In the mobile case, we may hope to achieve detection {\it over time\/} with 
constant average degree ($\lambda=O(1)$, even {\it below\/} the percolation threshold).
In this scenario, the detection time, $\tdet$, is formulated as the number of
steps until a target initially at the origin is detected by the MGG.
Recent work of Liu et al.~\cite{Liu} shows that the detection time
in two dimensions is exponentially distributed when the nodes 
of the network move in {\it fixed\/} directions.  
In the more realistic MGG model, we are able to prove the following
result which holds in all dimensions (see Section~\ref{sec:detection}):
\begin{theorem}\label{thm:detection}
In the MGG model with any fixed~$\lambda$ and 
range of motion~$s>0$, the detection time for a fixed target or a target
moving under Brownian motion satisfies
$\pr{\tdet \geq t} = \exp\left(-\Theta(t/\log t)\right)$ for $d=2$, and 
$\pr{\tdet \geq t} = \exp\left(-\Theta(t)\right)$ for $d \geq 3$.
\end{theorem}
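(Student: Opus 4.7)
The plan is to view the Brownian trajectory of each node as a mark on the initial Poisson point process and thereby reduce the detection event to a Wiener sausage computation.

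\textbf{Fixed target.} Think of each pair (initial position of a node, Brownian trajectory on $[0,t]$) as a mark of a Poisson point process on $\rset^d \times C([0,t],\rset^d)$ with intensity $\lambda\,dx \otimes \mathbb{W}^x$, where $\mathbb{W}^x$ denotes Wiener measure started from $x$. The event $\{\tdet > t\}$ is exactly the event that no mark $(x,B)$ has $B$ entering $B_r(0)$ during $[0,t]$; by marked-Poisson thinning combined with Fubini and time reversal,
\begin{equation*}
\pr{\tdet > t} \;=\; \exp\!\bigl(-\lambda\, \E{|W_t|}\bigr),
\end{equation*}
where $|W_t|$ is the volume of the Wiener sausage of radius $r$ about a Brownian motion of variance $s^2$ run for time $t$. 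The classical asymptotics $\E{|W_t|}=\Theta(t/\log t)$ in $d=2$ and $\Theta(t)$ in $d\ge 3$ then immediately give the theorem for a fixed target in both directions.

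\textbf{Moving target.} Condition on the target's Brownian path $T$; the nodes' relative trajectories $B^x-T$ remain conditionally independent, so the marked-Poisson computation applies and gives
\begin{equation*}
\prcond{\tdet > t}{T} \;=\; \exp\!\bigl(-\lambda\, \Econd{|S_t^{T,W}|}{T}\bigr), \qquad S_t^{T,W}:=\bigcup_{u\le t} B_r(T(u)-W(u)),
\end{equation*}
with $W$ a fresh Brownian motion independent of $T$. Since $T-W$ has the law of a Brownian motion of variance $2s^2$, the unconditional distribution of $|S_t^{T,W}|$ is that of the Wiener sausage of such a BM. Applying Jensen's inequality to the outer expectation in $T$ then yields the lower bound $\pr{\tdet>t}\ge \exp(-\lambda\,\E{|W_t^{(2s^2)}|})$, which is $\exp(-O(t/\log t))$ in $d=2$ and $\exp(-O(t))$ in $d\ge 3$.

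\textbf{Upper bound and main obstacle.} For the matching upper bound in the moving case, a naive Jensen-plus-concentration argument is too weak: Donsker--Varadhan gives only $\exp(-\Theta(t^{(d-2)/d}))$ for the lower tail of $|W_t|$ in $d\ge 3$, which is weaker than the claimed $\exp(-\Omega(t))$. I would instead discretize time into blocks of length $\Delta=\Theta(\log t)$ in $d=2$ and $\Delta=\Theta(1)$ in $d\ge 3$, and in each block partition the nodes into those that have already been close to the target's past trajectory and those that have not. Poisson thinning makes the latter class a fresh Poisson process independent of the past, and each such node has a constant probability of reaching within $r$ of the target during the block. The strong Markov property chains these per-block events to give the bounds $\exp(-\Omega(t/\log t))$ and $\exp(-\Omega(t))$. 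The main difficulty is controlling the cross-block correlations without losing logarithmic factors, especially in $d=2$ where the per-block detection rate is itself only of order $1/\log t$ and the geometric regularity of the target's trajectory must be tracked carefully.
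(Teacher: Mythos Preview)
Your Wiener-sausage treatment of the fixed target and your Jensen argument for the lower bound in the moving case are correct; the paper itself remarks that the fixed-target result follows from classical sausage asymptotics. One caveat: the model detects only at integer times, so the exact identity involves the volume of a \emph{discrete} sausage (a union of balls at integer times), and the continuous sausage gives only one of the two inequalities; the $\Theta$-order is unaffected, but you should say so.

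Where you diverge from the paper is the moving-target upper bound, and here your block decomposition is unnecessarily delicate. You are right that concentration of the sausage volume is too weak, but the paper sidesteps concentration entirely. Conditioning on the target path $X$ already gives the exact identity $\prcond{\tdet(Q_L)\ge t}{X}=\exp(-\lambda L^d\,\prcond{\tau<t}{X})$ (your own formula, with $\tau$ the detection time of a single node placed uniformly in $Q_L$), so it suffices to lower-bound $\prcond{\tau<t}{X}$ \emph{pointwise in $X$}. The paper does this by a renewal/second-moment trick: writing $M$ for the expected number of time steps in $[0,t)$ at which a single such node detects the target, and $m_2$ for an upper bound on the expected number of \emph{further} detections given one has just occurred, one has $\prcond{\tau<t}{X}\ge M/(1+m_2)$. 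Both $M\ge ct/L^d$ and $m_2\le c'\log t$ (for $d=2$) or $m_2\le c'$ (for $d\ge3$) follow by direct integration of the Brownian transition density and hold uniformly over $X$. This yields $\prcond{\tdet\ge t}{X}\le\exp(-\Omega(t/\log t))$ or $\exp(-\Omega(t))$ for \emph{every} target trajectory, with no block structure, no Poisson refreshing, and no need to track the geometry of~$X$. That pathwise second-moment bound is the idea your proposal is missing.
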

The constants in the $\Theta$ here depend only on $\lambda$, $s$ 
and the dimension~$d$.  Thus the tail of the detection time is exponential
in three and higher dimensions, and exponential with a logarithmic
correction in two dimensions.  We note that, as is evident from the proof,
this dichotomy between two and three dimensions reflects the difference 
between recurrent and transient random walks in~$\zset^2$ and~$\zset^3$ 
respectively.  We also note that the upper bound in Theorem~\ref{thm:detection}
holds for {\it arbitrary\/} motion of the target (provided it is independent of
the motion of the nodes); and the lower bound holds for any ``sufficiently random"
motion of the target.

We should point out that, for the special case of a fixed target, a 
slightly stronger version of Theorem~\ref{thm:detection}, with a tight
constant in the exponent, follows from classical results on the ``Wiener
sausage'' in continuum percolation. 
(This was pointed out to us by Yuval Peres~\cite{Peres}; see Related
Work for details.)
However, it is not clear how to extend this approach to the case of
a moving target.  Our proof is elementary and based on an
application of the mass transport principle.
\par\medskip\noindent
{\bf Percolation.}\ \ 
A fundamental question in mobile networks is whether a node can efficiently
communicate with other nodes even when the network is not connected at any given time.
In the MGG model, this question may naturally be formulated by considering
a constant intensity $\lambda>\lambda_\critperc$ (i.e., above the percolation threshold) 
and asking how long it takes until a node initially at the origin belongs to the giant 
component (or the infinite component in the limit $n\to\infty$).  We call this 
the {\it percolation time}~$\tperc$.  It should be clear that the percolation time can be used 
to derive bounds on other natural quantities, such as the time for a node to
broadcast information to all other nodes (see Corollary~\ref{cor:broadcast}
below).  As far as we are aware the 
percolation time has not been investigated before, largely because previous work on
RGGs has focused on networks above the connectivity threshold.  However,
it appears to be a fundamental question in the mobile context.

The detection time clearly provides a lower bound on the percolation time,
so we may deduce from Theorem~\ref{thm:detection} above that 
$\pr{\tperc\geq t}$ is at least $\exp\left(-O(t/\log t)\right)$ for $d=2$ 
and at least $\exp\left(-O(t)\right)$ for $d \geq 3$.  We are able to
prove the following stretched exponential upper bound in all dimensions
$d\geq 2$ (see Section~\ref{sec:percolation}):
\begin{theorem}\label{thm:percolation}
In the MGG model with any fixed $\lambda>\lambda_\critperc$ 
and range of motion $s>0$, the percolation time for a node at the origin satisfies
$\pr{\tperc \geq t} =\exp\left(-\Omega(t^{d/(d+2)})\right)$ in all
dimensions $d\ge 2$.
\end{theorem}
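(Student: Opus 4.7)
The plan is to perform a spatio-temporal renormalization at length scale $L = \Theta(t^{1/(d+2)})$ and time scale $\tau = L^{2}$, producing $M := \lfloor t/\tau\rfloor = \Theta(t^{d/(d+2)})$ sampling times $t_{k} = k\tau$. The entire bound will follow if we can show that, in each window, the origin node has a constant (independent of $L$) conditional probability of entering the infinite cluster, since $M$ such lower bounds multiplied together yield exactly $\exp(-\Omega(t^{d/(d+2)}))$. Note the scale choice is forced: $\tau = L^{2}$ is the Brownian mixing time for displacements of order $L$, and $M = t/L^{2}$ matches the target exponent precisely when $L = t^{1/(d+2)}$.

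\paragraph{Local good event.}
Let $Y_{k}$ denote the position of the origin node at time $t_{k}$ and let $B_{L}(Y_{k})$ be the cube of side $L$ centered there. Define the local good event $B_{k}$: at time $t_{k}$, the origin node belongs to a cluster of $G_{t_{k}}$ of size $\Omega(L^{d})$ that crosses every face of $B_{L}(Y_{k})$. By classical supercritical RGG estimates (see Penrose~\cite{Penrose}), for any $\lambda > \lambda_\critperc$ and for $L$ larger than some constant, a crossing cluster in $B_{L}(Y_{k})$ exists with probability $1 - \exp(-\Omega(L^{d-1}))$, is part of the unique infinite cluster with similarly high probability, and contains the origin with probability bounded below by a constant. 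Hence $\pr{B_{k}} \geq p^{*}$ for a constant $p^{*} > 0$ depending only on $\lambda$ and $d$, and by definition $B_{k} \subseteq A_{k} := \{\text{origin is in the infinite cluster at }t_{k}\}$.

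\paragraph{Decorrelation.}
The key technical step is a conditional lower bound $\pr{B_{k}\mid \mathcal{F}_{t_{k-1}}} \geq p^{**} > 0$ a.s., where $\mathcal{F}_{s}$ is the history of the MGG up to time $s$. The intuition is that during the interval $(t_{k-1}, t_{k}]$, each node performs an independent Brownian increment of standard deviation $sL$, so the set of nodes in $B_{L}(Y_{k})$ at time $t_{k}$ has been largely refreshed. To formalize this, split the underlying Poisson process at $t_{k-1}$ into \emph{buffer} points (those in $B_{CL}(Y_{k-1})$) and \emph{outer} points; by the displacement theorem for Poisson processes, the contribution of outer points to $B_{L}(Y_{k})$ at time $t_{k}$ is a Poisson process of intensity $\alpha_{C}\lambda$, where $\alpha_{C} \in (0,1)$ is a Gaussian-tail coefficient determined by $C$ and $s$, and is independent of the buffer's adversarial contribution. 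Choosing $C$ small enough that $\alpha_{C}\lambda > \lambda_\critperc$ (possible for any $\lambda > \lambda_\critperc$), the outer Poisson alone is supercritical and produces a crossing cluster in $B_{L}(Y_{k})$ with probability $\geq p^{**}$; by monotonicity of RGG connectivity under addition of points, adding the buffer's nodes only enlarges clusters. Combined with the Markov property of the MGG, this gives the desired conditional lower bound.

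\paragraph{Conclusion and main obstacle.}
Once the conditional bound is in hand, $\mathbf{1}_{B_{k}}$ stochastically dominates an i.i.d.\ Bernoulli$(p^{**})$ sequence, so
$$\pr{\tperc \geq t} \leq \pr{\bigcap_{k=0}^{M} B_{k}^{c}} \leq (1 - p^{**})^{M} = \exp\!\bigl(-\Omega(t^{d/(d+2)})\bigr).$$
The main obstacle is the decorrelation step. A subtle point is that the displacement theorem delivers Poissonness of the outer-to-inner flow only when we condition on less than the full $\mathcal{F}_{t_{k-1}}$ (namely, on buffer data alone, not on the exact outer positions at $t_{k-1}$), whereas chaining the conditional bound across windows asks to condition on the full history. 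Resolving this tension via a careful choice of nested sub-$\sigma$-algebras — and, for $\lambda$ very close to $\lambda_\critperc$ where $\alpha_{C}$ must be close to $1$ and thus $C$ small, handling the weakened decorrelation by a multi-scale bootstrap — is where the real technical work lies.
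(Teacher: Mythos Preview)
Your overall architecture matches the paper's: length scale $\ell\sim t^{1/(d+2)}$, time scale $\Delta\sim\ell^2$, $t/\Delta\sim t^{d/(d+2)}$ windows, crossing clusters in local boxes linked to the infinite cluster via Penrose--Pisztora, and a product bound. However, the decorrelation step contains a genuine gap that you yourself flag but do not resolve. The buffer/outer split does not deliver the needed conditional bound: once you condition on $\mathcal{F}_{t_{k-1}}$, the outer points sit at \emph{fixed} (deterministic, non-Poisson) locations, so the displacement theorem no longer yields a Poisson process in $B_L(Y_k)$; and if instead you condition only on the buffer, you have not conditioned on $B_0^c\cap\cdots\cap B_{k-1}^c$ (those events depend on nodes that, by time $t_{k-1}$, may be anywhere), so the chain needed for the product collapses. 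The suggested fixes (``nested sub-$\sigma$-algebras'', ``multi-scale bootstrap'') are too vague to constitute a proof, and there is no obvious way to make the outer region simultaneously Poisson and measurable with respect to the history you must condition on.

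The paper's resolution is concrete and different in mechanism. It tessellates a large cube $Q_{2L}$ (with $L=\Theta(t)$) into cells of side $\ell$ and introduces a \emph{density event} $\mathcal{D}$: every cell contains at least $(1-\xi)\lambda\ell^d$ nodes, which fails with probability at most $\exp(-\Omega(\ell^d))$ per time step. The key is a coupling lemma (Proposition~\ref{pro:coupling}): for \emph{any} deterministic configuration satisfying the density condition, after $\Delta=\Theta(\ell^2/s^2)$ steps of Brownian motion the configuration inside $Q_L$ stochastically dominates a \emph{fresh} Poisson point process of intensity $(1-\xi)^2\lambda>\lambda_\critperc$, independent of the starting configuration. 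Because this domination holds pointwise on the density event, one may condition on the entire history up to time $\Delta(i-1)$, intersect with $D_{\Delta(i-1)}$, and still obtain a fresh supercritical Poisson in the local box at time $\Delta i$; monotonicity of the crossing event then gives a uniform constant lower bound on its conditional probability. Balancing the density-failure cost $\exp(-\Omega(\ell^d))$ against the product $\exp(-\Omega(t/\Delta))$ forces $\ell^d=\Theta(t/\ell^2)$, i.e.\ $\ell=t^{1/(d+2)}$. This density-plus-coupling device is precisely the missing idea in your plan; it replaces your attempt to carve out a spatially ``unconditioned'' region by instead carving out a high-probability event on which \emph{every} configuration regenerates.
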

Again, the constant in the  $\Omega$ depends only on $\lambda$, $s$
and~$d$.  There is a gap between this upper bound and the lower bound 
from Theorem~\ref{thm:detection}.  We conjecture that the true tail behavior
of~$\tperc$ is $\exp\left(-\Theta(t/\log t)\right)$ for $d=2$ and  
$\exp\left(-\Theta(t)\right)$ for $d \geq 3$.

Theorem~\ref{thm:percolation} is the main technical contribution of the paper;
we briefly mention some of the ideas used in the proof.  The key technical
challenge is the dependency of the RGGs~$G_i$ over time.  To overcome
this, we partition~$\rset^d$ into subregions of suitable size and {\it couple\/}
the evolution of the nodes in each subregion with those of a fresh Poisson
point process of slightly smaller intensity $\lambda'<\lambda$ which is still
larger than the critical value~$\lambda_\critperc$.  After 
a number of steps~$\Delta$ that depends on the size  of the subregion,
we are able to arrange that the coupled processes match up almost
completely.  As a result, we can conclude that our original MGG process,
observed every~$\Delta$ steps, contains a sequence of {\it independent\/}
Poisson point processes with intensity~$\lambda''>\lambda_\critperc$.  
(This fact, which we believe is of wider applicability, is formally stated
in Proposition~\ref{pro:coupling} in Section~\ref{sec:percolation}.)  This
independence is sufficient to complete the proof.  The slack in the bound
comes from the ``delay"~$\Delta$.

To illustrate a sample application of Theorem~\ref{thm:percolation}, we consider
the time taken to broadcast a message in a network of finite size.
Consider a MGG in a cube of volume~$n/\lambda$ (so the expected\footnote{The
result can be adapted to the case of a {\it fixed\/} number of nodes~$n$ using 
standard ``de-Poissonization" arguments~\cite{Penrose}.  See the Remark 
following the proof of Corollary~\ref{cor:broadcast} in Section~\ref{sec:broadcast}.}
number of nodes is~$n$).  Since the volume is finite, we need to modify
the motion of the nodes to take account of boundary effects: following
standard practice, we do this by turning the cube into a torus (so that nodes 
``wrap around" when they reach the boundaries).
Suppose a message originates at an arbitrary node
at time~0, and at each time step~$t$ each node that has already
received the message broadcasts it to all nodes in the same connected
component.  (Here we are making the reasonable assumption
that the speed of transmission is much faster than the motion of the nodes,
so that messages can travel throughout a connected component before it is
altered by the motion.)  Let $\tbroad$ denote the time until all nodes have
received the message.
\begin{corollary}\label{cor:broadcast}
In a MGG on the torus of volume~$n/\lambda$ with any fixed $\lambda > \lambda_\critperc$ and range of motion $s>0$, 
the broadcast time $\tbroad$ 
is $O(\log^{1+2/d} n)$ w.h.p.\ in any dimension $d\ge 2$.
\end{corollary}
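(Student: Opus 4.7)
The plan is to combine Theorem~\ref{thm:percolation} with a union bound over all $n$ nodes, and then argue that once the message reaches the giant component it persists in every subsequent giant.

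First I would set $T=C\log^{1+2/d}n$ with $C$ large enough that Theorem~\ref{thm:percolation} gives $\pr{\tperc\ge T}\le n^{-3}$; this is possible because $T^{d/(d+2)}=\Theta(\log n)$. By translation invariance of the Poisson process and the Markov property of the MGG (the node positions at any deterministic time are again Poisson, so the process restarted at time $T$ is itself an MGG), the same bound applies to any fixed node in any length-$T$ time window. A brief comparison argument transfers Theorem~\ref{thm:percolation} from $\rset^d$ to the torus of volume $n/\lambda$, since the diffusive spatial scale $\sqrt T=\mathrm{polylog}(n)$ is much smaller than the torus side length $(n/\lambda)^{1/d}$. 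Applying this to the originating node $v_0$ on the window $[0,T]$ and to every other node on the window $[T,2T]$, and then union-bounding over all $\Theta(n)$ nodes, with probability $1-o(1)$ the origin lies in the giant component at some time $\tau_0\in[0,T]$ and every node $v$ lies in the giant at some time $\tau_v\in[T,2T]$.

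Write $C_t$ for the (w.h.p.\ unique) giant component of $G_t$ and $S_t$ for the informed set at time~$t$. I would prove the invariant $S_t\supseteq C_t$ for all $t\in[\tau_0,2T]$ by induction. The base case holds because $v_0\in S_{\tau_0}$ and $v_0$'s component in $G_{\tau_0}$ is exactly $C_{\tau_0}$, so the broadcast rule gives $S_{\tau_0}\supseteq C_{\tau_0}$. For the inductive step, if $S_t\supseteq C_t$ and $C_t\cap C_{t+1}\ne\emptyset$, then the component $C_{t+1}$ of $G_{t+1}$ meets $S_t$, so the broadcast at step $t+1$ informs every node of $C_{t+1}$. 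Given this invariant, each node $v$ with $\tau_v\ge\tau_0$ satisfies $v\in C_{\tau_v}\subseteq S_{\tau_v}$, which yields $\tbroad\le 2T=O(\log^{1+2/d}n)$ w.h.p.

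The hard part is verifying the hypothesis $C_t\cap C_{t+1}\ne\emptyset$ simultaneously for all $t\in[0,2T]$. I would argue as follows. For any fixed (Palm) node $u$, both events $\{u\in C_t\}$ and $\{u\in C_{t+1}\}$ are increasing functions of the underlying Poisson world-line process (viewed jointly over times $t$ and $t+1$), so by the FKG inequality they are positively correlated, giving $\pr{u\in C_t\cap C_{t+1}}\ge\theta(\lambda)^2$ where $\theta(\lambda)>0$ is the density of the infinite cluster at intensity~$\lambda$. Hence $\E{|C_t\cap C_{t+1}|}=\Omega(n)$, and a bounded-differences concentration inequality on the underlying Poisson process yields $|C_t\cap C_{t+1}|=\Theta(n)$ except on an event of probability $\exp(-\Omega(n))$. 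A union bound over the $O(T)=\mathrm{polylog}(n)$ time steps (together with similar standard estimates for existence and uniqueness of the giant at each step) preserves overall failure probability $o(1)$, completing the argument.
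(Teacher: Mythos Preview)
Your overall architecture matches the paper's: transfer Theorem~\ref{thm:percolation} to the torus by noting that the relevant spatial scale is polylogarithmic, apply it to every node via a union bound (the paper invokes Palm theory explicitly where you invoke translation invariance), and propagate the message through time via the invariant $S_t\supseteq C_t$, which reduces to showing $C_t\cap C_{t+1}\ne\emptyset$ at every step.

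The gap is in your concentration step for $|C_t\cap C_{t+1}|$. A bounded-differences inequality needs that perturbing a single world-line changes $|C_t\cap C_{t+1}|$ by $O(1)$, but this is false: deleting one node can sever the giant, shifting membership for $\Theta(n)$ other nodes, so the Lipschitz constant is of the same order as the mean and the inequality is vacuous. (Relatedly, with $C_t$ defined as the \emph{largest} component, the event $\{u\in C_t\}$ is not increasing in the world-line configuration, so your FKG step already needs a monotone surrogate such as ``$u$'s component has size at least $\delta n$''.) The paper sidesteps both issues with a sprinkling trick: it splits $\Pi_i$ into independent pieces $\Pi_i'$ of intensity $(1-\epsilon)\lambda>\lambda_\critperc$ and $\Pi_i''$ of intensity $\epsilon\lambda$, builds the giants from $\Pi_i'$ alone, and then observes (via FKG, as you do) that each node of $\Pi_i''$ lies in both giants with probability at least $c^2$. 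Because $\Pi_i''$ is independent of the structure defining the giants, this is a genuine location-dependent thinning of $\Pi_i''$; the successful witnesses therefore form a Poisson process of intensity $\epsilon\lambda c^2$, and the probability that none lands in the torus is $e^{-\Theta(n)}$---no concentration inequality needed.
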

\subsubsection*{Related work}
There are many theoretical results on routing and other
algorithmic questions on (static) RGGs; we mention
just a few highlights here.  The seminal work of Gupta and Kumar~\cite{GK2,GK3}
(see also~\cite{Fran} for refinements) examined the information-theoretic
{\it capacity\/} (or throughput) of such networks above the connectivity 
threshold, i.e., the number of bits per unit time that each node~$u$ can
transmit to some (randomly chosen) destination node~$t_u$ in steady
state, assuming constant size buffers in the network.  The capacity per
unit node is $\Theta(n^{-1/2})$, which tends to~0 as $n\to\infty$, 
suggesting a fundamental limitation on the scalability of such static
networks.  

The {\it detection\/} problem has received
much attention.  In the static case detection is essentially equivalent to 
{\it coverage\/} of the region~$S$, which requires that the network be connected.
In the absence of coverage, Balister et al.~\cite{Bal} determine the  maximum 
diameter of the uncovered regions, while Dousse et al.~\cite{Dousse} prove that, for
any $\lambda>0$, the detection time for a target moving in a fixed
direction has an exponential tail.
(Note that this is not a mobility result as the nodes are fixed.)  

The question of {\it broadcasting\/} within the giant component of a RGG 
above the percolation
threshold was recently analyzed by Bradonji\'c et al.~\cite{B++}, who also show
that the graph distance between any two (sufficiently
distant) nodes is at most a constant factor larger than their Euclidean 
distance.  Cover times for random walks on (connected) RGGs were investigated by 
Avin and Ercal~\cite{AE} and Cooper and Frieze~\cite{CF2}, while
the effect of physical obstacles that obstruct 
transmission was studied by Frieze et al.~\cite{FKRD}.

The scope of mathematically rigorous work on RGGs with mobility
is much more limited.  We briefly summarize it here.

Motivated by the fact mentioned above~\cite{GK2} that the capacity of static
networks goes to zero as $n\to\infty$, Grossglauser and Tse~\cite{GT} (see
also~\cite{DGT}) showed how to exploit mobility to achieve constant 
capacity using a two-hop routing scheme.  However, these results require
the unrealistic assumption that  nodes move a distance comparable to the
diameter of the entire region~$S$ at each step.
El~Gamal et al.~\cite{EMPS} study the tradeoff between capacity and
delay in a realistic mobility model but above the connectivity threshold.
Clementi et al.~\cite{Clem} show how to exploit mobility to enable
broadcast in a RGG sufficiently far above the percolation threshold.
However, this result again assumes that the
range of motion of the nodes is unbounded (i.e., $s$ grows with~$n$).

As mentioned earlier, the detection problem was addressed
by Liu et al.~\cite{Liu}, assuming that each node moves continuously
in a {\it fixed\/} randomly chosen direction; they show that the time it takes for
the network to detect a target is exponentially distributed with expectation
depending on the intensity~$\lambda$.  Also, for the special case of
a stationary target, as observed in~\cite{Kesidis,Konst} a slightly 
stronger version of Theorem~\ref{thm:detection}, with tight constants
in the exponent, can be deduced from 
classical results on continuum percolation: namely, in this case it is shown
in~\cite{SKM} that (in continuous time)
$\mathbf{Pr}[\tdet \geq t] = \exp(-\lambda V_s(t))$, where $V_s(t)$ is the 
expected volume of the ``Wiener sausage" of length~$s^2t$ (essentially the
trajectory of a Brownian motion ``fattened" by a disk of radius~$r$).  This 
volume in turn is known quite precisely~\cite{Spitzer,BMS}.

A model essentially equivalent to MGGs was introduced under the name
``dynamic boolean model" by Van den Berg et al.~\cite{vandenberg},
who studied the measure of the set of times at which an infinite component
exists.
Finally, recent work of D\'iaz et al.~\cite{Diaz} in a similar model determines, for 
networks exactly at the connectivity threshold, the expected length of time for which 
the network  stays connected (or disconnected) as the nodes move.
However, this question makes sense only for very large values of~$\lambda$
(growing with~$n$) and thus falls outside the scope of our investigations.
\section{Preliminaries}
\label{sec:prelims}
For any $\ell\geq 0$, let $B_\ell$ be the $d$-dimensional ball centered at the origin with radius~$\ell$. 
Similarly, let $Q_\ell$ be the cube with side-length $\ell$ centered at the origin and with sides
parallel to the axes of $\mathbb{R}^d$. For any point
$z\in \mathbb{R}^d$ and set $A \subseteq \mathbb{R}^d$, 
we define $z+A$ as the Minkowski sum $z+A = \{y \colon y-z\in A\}$.
The volume of a set~$A\subset\mathbb{R}^d$ is denoted~$|A|$.

\topic{Poisson point processes}%
A ``point process" is a random collection of points in~$\mathbb{R}^d$; for a formal
treatment of this topic, the reader is referred to~\cite{SKM}. To avoid ambiguity, we 
refer to the points of a point process as \emph{nodes} and reserve the word \emph{points} for
arbitrary locations in $\mathbb{R}^d$. 
We are mostly interested in \emph{Poisson point processes}. A Poisson
point process with intensity~$\lambda$ in a region $S\subseteq\rset^d$
is defined by a single property: 
for every bounded Borel set $A\subseteq S$, the 
number of points in~$A$ is a Poisson random variable with mean~$\lambda \vol{A}$. 
We will make use of the following standard properties of Poisson point processes:
(1)~for disjoint sets $A, A'\subseteq S$, the numbers of points in~$A$ and in~$A'$ are independent;
(2)~conditioned on the number of nodes in~$A$, each such node is located independently
and uniformly at random in~$A$;
(3)~[thinning] if each node of a Poisson point process with intensity~$\lambda$ is deleted
with probability~$p$, the result is a Poisson point process with intensity~$(1-p)\lambda$;
(4)~[superposition] the union of two Poisson point processes in~$S$ with intensities 
$\lambda_1$ and $\lambda_2$ is a Poisson point process with intensity $\lambda_1+\lambda_2$.
In some of our proofs we will make use of {\it non-homogeneous\/} Poisson point processes,
whose intensity $\lambda(x)$ is a function of position $x\in\rset^d$.  In such a process
the expected number of nodes in a set~$A$ is $\int_A\lambda(x)dx$.
\topic{Random geometric graphs}%
Fix parameters $\lambda,r\geq 0$, and let
$S_n=Q_{(n/\lambda)^{1/d}}$ be the cube of volume~$n/\lambda$ in~$\rset^d$.
Let $\Xi_n$ be a Poisson point process over~$S_n$ with 
intensity~$\lambda$. A random geometric graph (RGG)
$\mathcal{G}(\Xi_n,r)$ is constructed by taking the node set to be the nodes of $\Xi_n$ and 
creating an edge between every pair of nodes whose Euclidean distance is at most~$r$.
The parameter $r$ is called the \emph{transmission range}.
Since $\Xi_n$ is a Poisson point process, 
the expected number of nodes in $\mathcal{G}(\Xi_n,r)$ is~$n$. 

It is well known~\cite{BR,MR} that as $n\to\infty$
the random graph model induced by $\mathcal{G}(\Xi_n,r)$ depends only 
on the product $\lambda \vol{B_r}$.  For this reason, we will always fix $r=r(d)$
so that $\vol{B_r}=1$ and parameterize the model only on~$\lambda$. 
Note that with this convention, in the limit as $n\to\infty$, $\lambda$ is also
the expected degree of any node in $\mathcal{G}(\Xi_n,r)$.

Using a Poisson point process rather than a fixed number of nodes is a standard
trick that simplifies the mathematics.  Most results in this model can be
translated to a model with a fixed number~$n$ of nodes in~$S_n$
using a technique known as ``de-Poissonization"~\cite{Penrose}.

Many asymptotic properties of 
$\mathcal{G}(\Xi_n,r)$ as $n\to\infty$ are studied in the monograph by Penrose~\cite{Penrose}. 
For example, it is known that $\lambda=\log n$ is a threshold for connectivity, in the sense
that if $\lambda = \log n+\omega(1)$ then $\mathcal{G}(\Xi_n,r)$ is connected 
w.h.p., and if $\lambda = \log n - \omega(1)$ then $\mathcal{G}(\Xi_n,r)$
is disconnected w.h.p.  Another important critical value is the 
{\it percolation threshold\/} $\lambda=\lambda_\critperc$ (a constant that depends on
the dimension~$d$); if $\lambda > \lambda_\critperc$ then w.h.p.\ $\mathcal{G}(\Xi_n,r)$ 
contains a unique ``giant" connected component with $\Theta(n)$ nodes, 
while all other components are of size $O(\log^{d/(d-1)}n)$;
on the other hand, if $\lambda < \lambda_\critperc$ then w.h.p.\ 
all connected components of $\mathcal{G}(\Xi_n,r)$ have size $O(\log n)$.
The value of $\lambda_\critperc$ is not known exactly in any dimension $d\ge 2$.
However, for $d=2$ the rigorous bounds $2.18 < \lambda_\critperc < 10.60$ are
known~\cite[Section~3.9]{MR}, while Balister et al.~\cite{BalBol2} used Monte 
Carlo methods to deduce that $\lambda \in (4.508,4.515)$ with confidence $99.99\%$.

Finally, we remark that in the limit as $n\to\infty$ 
(that is, when the Poisson point process is defined over the whole of $\mathbb{R}^d$) 
the percolation threshold $\lambda=\lambda_\critperc$ still exists and is characterized by
the appearance of a unique infinite component (or ``infinite cluster") with probability~1
for any $\lambda>\lambda_\critperc$.  In this limit the graph is disconnected with probability~1
for any value of~$\lambda$.

\topic{Mobile geometric graphs}%
We define our mobile geometric graph (MGG) model by taking a Poisson point process 
with intensity~$\lambda$ in $\mathbb{R}^d$ at time~0 and letting each node move in continuous 
time according to an independent Brownian motion.  We sample the locations of
the nodes at discrete time steps $i=1,2,\ldots$ and use these locations to define a
sequence of random geometric graphs with transmission range~$r=r(d)$.  We base our
MGG model on the infinite volume~$\rset^d$ to avoid having to handle boundary
effects on the motion of the nodes.  Results in this model can be translated to finite
regions with a suitable convention---such as wraparound or reflection---to handle the
motion of nodes at the boundaries.  

More formally, let $\Pi_0$ be a Poisson point process with intensity $\lambda$ 
over~$\mathbb{R}^d$.  We take a parameter $s\geq 0$, and with each node $v\in \Pi_0$
we associate an independent $d$-dimensional Brownian motion $\{W_v(i)\}_{i\geq 0}$ 
that starts at the location of $v$ in $\Pi_0$ and has variance~$s^2$~\cite{KS}. 
Now, for any $i\in \mathbb{Z}^+$, we define $\Pi_i$ as the point process obtained by 
putting a node at $W_v(i)$ for each $v\in\Pi_0$.  A MGG is then the collection of
graphs $G=\{G_i\}_{i\geq 0}$ where $G_i=\mathcal{G}(\Pi_i,r)$ and $r=r(d)$ is
fixed so that $|B_r|=1$. 
(Note that, as in the static case, fixing the value of~$r$ may be done w.l.o.g.)

It is an easy consequence of the mass transport principle (see below) that 
each $\Pi_i$, viewed in isolation, is itself a Poisson point process with intensity~$\lambda$.
This means that the sequence $\{\Pi_i\}_{i\geq 0}$ is stationary and therefore, 
when viewed in isolation, $G_i$ is a random geometric graph over $\mathbb{R}^d$.
Thus, for example, if $\lambda > \lambda_\critperc$ then each $G_i$ contains an infinite component 
with probability~1.

\topic{Mass transport principle}%
For two points $x,y\in\mathbb{R}^d$ and a time step $i \geq 0$, we define $f_i(x,y)$ as the probability
density function for a node located at position $x$ at time $0$ to be at position $y$ at time $i$. 
Since nodes move according to $d$-dimensional Brownian motion, we have
$f_i(x,y) = \frac{1}{(2\pi s^2 i)^{d/2}}\exp(-\frac{\|y-x\|_2^2}{2s^2i})$.

In some situations, it is useful to regard $f_i(x,y)$ as a mass transport function. 
For example, suppose nodes are initially distributed according to a Poisson point
process with intensity~$\lambda$ in a region $A\subseteq\rset^d$; we may view
this as a Poisson point process over~$\rset^d$ with (non-homogeneous) intensity
(or ``mass function")
$\nu_0(x)=\lambda\ind{x\in A}$.  Using the thinning  and superposition
properties, it is easy to check that the distribution of
the nodes at time~$i$ is a Poisson point process with intensity
$\nu_i(y) = \int_{\rset^d}\nu_0(x)f_i(x,y)dx$.
This interpretation can be used, for example, to show that in a MGG $\Pi_i$ is 
a Poisson point process with intensity~$\lambda$ for all~$i$, as claimed above. 

\section{Detection time}
\label{sec:detection}
In this section we prove Theorem~\ref{thm:detection}.
We consider the detection time for a node $u$ initially placed at the origin 
independently of the MGG~$G$. We say that a node $v \in G$ {\it detects}~$u$
at time step $i$ if the distance between $u$ and~$v$ at time step~$i$ is at most~$r$, and
we define $\tdet$ as the first time that $u$ is detected by some node of~$G$.
Our goal is to derive tight bounds for the tail of $\pr{\tdet \geq t}$. In the proof we 
consider the cases where $u$ is either non-mobile or moves according to Brownian motion with
variance~$s^2$. We discuss some extensions at the end of the section.

It will be convenient to restrict attention to the nodes of~$G$ that are initially inside 
the cube~$Q_L$, where $L$ is a suitably chosen parameter. We define
$\tdet(Q_L)$ as the first time a node initially inside $Q_L$ detects $u$. Note that clearly
$\pr{\tdet(Q_L) \geq t} \geq \pr{\tdet \geq t} = \lim_{L\to\infty} \pr{\tdet(Q_L) \geq t}$, where the limit exists 
since $\pr{\tdet(Q_L) \geq t}$ is monotone and bounded as a function of $L$. 
We let $X=(x_0,x_1,\ldots,x_{t-1})$ be the locations of $u$ in the first $t$ steps.
The following lemma relates the tail of $\tdet(Q_L)$ to the tail of an analogous 
random variable for a single random node in~$Q_L$.

\begin{lemma}\label{lem:det1step}
   We have $\prcond{\tdet(Q_L) \geq t}{X}=\exp\left(-\lambda L^d \prcond{\tau < t}{X}\right)$
   and $\pr{\tdet(Q_L) \geq t}\geq \exp\left(-\lambda L^d \pr{\tau < t}\right)$,
   where $\tau$ is the first time that a node initially located u.a.r.\ 
   in $Q_L$ detects~$u$.
\end{lemma}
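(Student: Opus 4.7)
The plan is to exploit the two defining features of the initial Poisson point process over $Q_L$: the total count $N$ of nodes inside $Q_L$ is Poisson with mean $\lambda L^d$, and conditional on $N$ the nodes are i.i.d.\ uniform in $Q_L$. Once these nodes start moving (each by an independent Brownian motion), their trajectories are themselves i.i.d., and by the assumption that the motion of $u$ is independent of the motion of the network, conditioning on the trajectory $X = (x_0,\dots,x_{t-1})$ does not couple the nodes to one another.

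The first step is therefore to condition on both $X$ and $N$. For each of the $N$ nodes, the probability that it does \emph{not} detect $u$ within the first $t$ steps is exactly $\prcond{\tau \geq t}{X}$, by the definition of $\tau$ as the detection time of a single node started uniformly in $Q_L$. Independence across nodes gives
\[
\prcond{\tdet(Q_L)\geq t}{X,N} = \prcond{\tau \geq t}{X}^{N}.
\]
The second step is to average over $N$ using the probability generating function of a Poisson random variable, $\E{z^{N}} = \exp(\lambda L^d (z-1))$, applied with $z = \prcond{\tau \geq t}{X}$. Writing $\prcond{\tau \geq t}{X} - 1 = -\prcond{\tau < t}{X}$ yields the claimed identity
\[
\prcond{\tdet(Q_L)\geq t}{X} = \exp\bigl(-\lambda L^d\,\prcond{\tau < t}{X}\bigr).
\]

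For the second (unconditional) assertion, the third step is to take expectation over $X$ and apply Jensen's inequality to the convex function $y \mapsto e^{-\lambda L^d y}$:
\[
\pr{\tdet(Q_L) \geq t} = \E{\exp\bigl(-\lambda L^d\,\prcond{\tau < t}{X}\bigr)} \geq \exp\bigl(-\lambda L^d\,\E{\prcond{\tau < t}{X}}\bigr),
\]
and the inner expectation collapses to $\pr{\tau < t}$ by the tower property.

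There is no real technical obstacle here; the only thing to be careful about is the independence structure, namely that the trajectories of distinct initial nodes of $\Pi_0 \cap Q_L$ remain i.i.d.\ after conditioning on~$X$ (which follows from the independence of $u$'s motion from the MGG) and that the Poisson count of nodes in $Q_L$ is independent of their subsequent Brownian motions. Both are immediate from the construction of the MGG in Section~\ref{sec:prelims}.
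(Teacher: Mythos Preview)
Your proof is correct and follows essentially the same approach as the paper: condition on $N$ and $X$, use independence of the nodes' trajectories to get $\prcond{\tau\ge t}{X}^N$, average over the Poisson count~$N$ via its generating function, and then apply Jensen's inequality over~$X$ for the lower bound. The paper makes the same remark you do about why conditioning on~$X$ is needed to decouple the nodes.
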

\begin{proof}
   Let $N$ be the number of nodes inside $Q_L$ at time $0$. Each of these nodes is initially 
   located uniformly at random inside $Q_L$, and the motion of each node does not depend 
   on the locations of the other nodes. 
   If we fix a given value for $X$, then 
   the first time that a given node $v$ detects $u$ does not depend on the other nodes of $G$ 
   and is distributed according to the conditional distribution of~$\tau$ given~$X$. 
   (Note that this is not true if $X$ is not fixed but random, because
   the random motion of~$u$ makes the relative displacements of the nodes of $G$ 
   with respect to~$u$ dependent.)
   Therefore, conditioning on $N$ and $X$, we have 
   $\prcond{\tdet(Q_L) \geq t}{N=n, X} = \prcond{\tau \geq t}{X}^n$, which yields
   $$
      \prcond{\tdet(Q_L) \geq t}{X} 
      = \Espace{\prcond{\tau \geq t }{X}^N}{N} 
      = \exp\left(-\lambda L^d \prcond{\tau < t}{X}\right),
   $$
   where we use the notation $\Espace{\cdot}{N}$ to denote expectation with respect to the 
   random variable~$N$, and the last equality holds since $N$ is Poisson with 
   mean $\lambda \vol{Q_L} = \lambda L^d$.
   For the lower bound, we appeal to Jensen's inequality to obtain
   \begin{eqnarray*}
      \pr{\tdet(Q_L) \geq t}
      &=& \Espace{\prcond{\tdet(Q_L) \geq t}{X}}{X} \nonumber\\
      &\geq& \exp\left(-\lambda L^d \Espace{\prcond{\tau < t}{X}}{X}\right)\\
      &=& \exp\left(-\lambda L^d \pr{\tau < t}\right),
   \end{eqnarray*}
   which completes the proof.
\end{proof}

\ignore{
   Let $N$ be the number of nodes inside $Q_L$ at time $0$. Each of these nodes is initially 
   located uniformly at random inside $Q_L$, and the motion of each node does not depend 
   on the locations of the other nodes. 
   Now, let $X=(x_0,x_1,\ldots,x_{t-1})$ be the locations of $u$ at the first $t$ steps, and
   let the random variable~$\tau$ be the first time that a node initially located uniformly at 
   random in $Q_L$ detects~$u$. If we fix a given value for $X$, then 
   the first time that a given node $v$ detects $u$ does not depend on the other nodes of $G$ 
   and is distributed according to the conditional distribution of~$\tau$ given~$X$. 
   (Note this is not true if $X$ is not fixed but random, because
   the random motion of~$u$ makes the relative displacements of the nodes of $G$ 
   with respect to~$u$ dependent.)
   Therefore, conditioning on $N$ and $X$, we obtain 
   $\prcond{\tdet(Q_L) \geq t}{N=n, X} = \prcond{\tau \geq t}{X}^n$, which yields
   \begin{equation}
      \prcond{\tdet(Q_L) \geq t}{X} 
      = \Espace{\prcond{\tau \geq t }{X}^N}{N} 
      = \exp\left(-\lambda L^d \prcond{\tau < t}{X}\right),
      \label{eq:deteq}
   \end{equation}
   where we use the notation $\Espace{\cdot}{N}$ to denote expectation with respect to the 
   random variable~$N$, and the last equality holds since $N$ is Poisson with 
   mean $\lambda \vol{Q_L} = \lambda L^d$.
   We are going to derive upper bounds for $\prcond{\tdet(Q_L) \geq t}{X}$ that hold 
   for arbitrary choices of $X$. 
   For the lower bounds, we assume that $u$ moves according to $d$-dimensional 
   Brownian motion with variance $s^2$ and appeal to Jensen's inequality to obtain
   \begin{eqnarray}
      \pr{\tdet(Q_L) \geq t}
      &=& \Espace{\prcond{\tdet(Q_L) \geq t}{X}}{X} \nonumber\\
      &\geq& \exp\left(-\lambda L^d \Espace{\prcond{\tau < t}{X}}{X}\right)
      = \exp\left(-\lambda L^d \pr{\tau < t}\right).
      \label{eq:detlb}
   \end{eqnarray}
}

We now proceed to derive upper and lower bounds for $\pr{\tau < t}$.
Let $v$ be a node initially located u.a.r.\ in~$Q_L$.
For time steps $i_1 \leq i_2$, let $M(i_1,i_2)$ be the expected number of time 
steps from $i_1$ to $i_2$ at which $v$ detects~$u$. We bound $M(0,t-1)$ as follows.

\begin{lemma}\label{lem:detm}
   Let $t\in\mathbb{Z}^+$ and $X=(x_0,x_1,\ldots,x_{t-1})\in \mathbb{R}^{dt}$ be arbitrary.  
   There exists a constant $c=c(d)$ and $L_0=L_0(t,\max_i \|x_i\|_2)$ such that
$ c t/L^d \leq M(0,t-1) \leq t/L^d$ for all $L \geq L_0$.
\end{lemma}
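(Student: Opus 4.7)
The plan is to write $M(0,t-1)$ as a sum of single-step detection probabilities and then analyze each term via the density of $W_v(i)$, where $v$ starts uniformly in $Q_L$. Concretely, since $W_v(i) = v_0 + Z_i$ with $v_0$ uniform on $Q_L$ and $Z_i$ an independent centered Gaussian with variance $is^2$ in each coordinate, the density of $W_v(i)$ is the convolution
\[
p_i(y) \;=\; \frac{1}{L^d}\int_{Q_L} g_{is^2}(y-v_0)\,dv_0,
\]
where $g_{is^2}$ is the Gaussian density. Then $M(0,t-1)=\sum_{i=0}^{t-1}\Pr[W_v(i)\in B_r(x_i)]=\sum_{i=0}^{t-1}\int_{B_r(x_i)}p_i(y)\,dy$, and both bounds reduce to bounds on $p_i$.

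For the upper bound, the integral defining $p_i$ is the integral of a probability density ($g_{is^2}$) over a subset of $\mathbb{R}^d$, hence is at most $1$, so $p_i(y)\le 1/L^d$ uniformly in $y$ and $i$. Integrating over $B_r(x_i)$ (recalling $|B_r|=1$) yields $\Pr[W_v(i)\in B_r(x_i)]\le 1/L^d$, and summing over $i$ gives $M(0,t-1)\le t/L^d$. (At $i=0$ the density is simply $\frac{1}{L^d}\mathbf 1_{Q_L}$, which satisfies the same bound.)

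For the lower bound, observe that whenever the ball of radius $R_i:=3s\sqrt{i\vee 1}$ around $y$ lies inside $Q_L$, one has
\[
p_i(y)\;\ge\;\frac{1}{L^d}\int_{B(y,R_i)} g_{is^2}(y-v_0)\,dv_0\;=\;\frac{\Pr[\|Z_i\|_2\le R_i]}{L^d}\;\ge\;\frac{c_1}{L^d}
\]
for some dimension-dependent constant $c_1>0$ (a standard Gaussian concentration bound at three standard deviations). Thus if $B_r(x_i)$ is contained in the shrunken cube $Q_{L-2(R_i+r)}$, every $y\in B_r(x_i)$ satisfies the above and integrating gives $\Pr[W_v(i)\in B_r(x_i)]\ge c_1/L^d$. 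Choosing
\[
L_0 \;=\; 2\max_i\|x_i\|_2 + 6s\sqrt{t} + 2r
\]
guarantees the containment simultaneously for all $i\le t-1$ whenever $L\ge L_0$, which is the quantitative meaning of $L_0=L_0(t,\max_i\|x_i\|_2)$ in the statement. Summing yields $M(0,t-1)\ge c_1 t/L^d$, giving the lemma with $c=c_1$.

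No step here is a real obstacle; the only thing to handle with care is the edge case $i=0$, where the density is not a true convolution but just the uniform density on $Q_L$, and the boundary buffer $R_i+r$, which is why $L_0$ must depend both on $t$ (to control the Gaussian spread) and on $\max_i\|x_i\|_2$ (to keep $B_r(x_i)$ well inside $Q_L$).
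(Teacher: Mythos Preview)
Your proof is correct and follows essentially the same approach as the paper: both express $M(0,t-1)$ as a sum of per-step detection probabilities, obtain the upper bound from $\int g_{is^2}\le 1$, and obtain the lower bound by restricting the integral to a ball of radius $\Theta(s\sqrt{i})$ where the Gaussian mass is a dimension-dependent constant. The paper phrases the computation in the language of the mass transport principle (working with the intensity $\nu_i$) and uses radius $s\sqrt{i+1}$ rather than your $3s\sqrt{i\vee 1}$, but these are cosmetic differences.
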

\begin{proof}
   We use the mass transport principle. We assume the initial intensity
   $\nu_0(z) = \lambda \ind{z \in Q_L}$ and let $\nu_i(z)$ be the intensity
   at $z\in\mathbb{R}^d$ at time $i$, i.e., $\nu_i(z) = \int_{\mathbb{R}^d} \nu_0(y) f_{i}(y,z) dy$
   for $i\geq 1$.
   At any time $i$, the probability that 
   $v$ detects $u$ is given by the ratio between the amount of mass inside $x_i+B_r$ and 
   the total amount of mass $\lambda L^d$. Noting that for $i=0$ this 
   ratio is $\frac{\lambda \vol{B_r}}{\lambda L^d}=1/L^d$,
   we can write $M(0,t-1)$ as 
   \begin{eqnarray}
      M(0,t-1) 
      &=& \frac{1}{L^d} + \sum_{i=1}^{t-1} \frac{\int_{B_r} \nu_i(x_i+z) dz}{\lambda L^d}\nonumber\\
      &=& \frac{1}{L^d} + \frac{1}{\lambda L^d} \sum_{i=1}^{t-1} \int_{B_r} \int_{Q_L} \lambda f_{i}(y,x_i+z) dy dz\nonumber\\
      &=& \frac{1}{L^d} + \frac{1}{L^d} \sum_{i=1}^{t-1} \int_{B_r} \int_{x_i+z+Q_L} f_{i}(0,y') dy' dz,
      \label{eq:proofm}
   \end{eqnarray}
   where the last step follows from the translation-invariance property of~$f_i$.
   Since $\vol{B_r}=1$, we obtain the upper bound from $\int_{x_i+z+Q_L} f_{i}(0,y')dy' 
   \leq \int_{\mathbb{R}^d} f_{i}(0,y')dy'=1$.

   For the lower bound, let $A_i=B_{s\sqrt{i+1}}$. We assume that $L$ is sufficiently large such that 
   $A_i \subseteq x_i+z+Q_L$.  Then replacing the integral over $x_i+z+Q_L$ in (\ref{eq:proofm}) by 
   an integral over $A_i$ gives the lower bound
   $$
      M(0,t-1) 
      \geq \frac{1}{L^d} + \frac{1}{L^d}\sum_{i=1}^{t-1} \int_{B_r}\int_{A_i} f_{i}(0,y')dy'dz
      \geq \frac{1}{L^d} + \frac{1}{L^d}\sum_{i=1}^{t-1} \frac{\vol{A_i}\vol{B_r}}{(2\pi s^2i)^{d/2}} \exp\left(-1\right)
      \geq \frac{ct}{L^d},
   $$
   where we used the fact that $\|y'\|_2 \leq s\sqrt{i+1}$ for all $y'\in A_i$, and the result holds for 
   some constant $c=c(d)$.
\end{proof}

Our goal is to write $M(i_1,i_2)$ conditioning on~$\tau$.
Let $M'(y,i_1,i_2)$ be the expected number of time steps from $i_1$ to~$i_2$ 
at which $v$ detects~$u$ given that the relative location of~$v$ with respect 
to~$u$ at time $i_1-1$ is~$y$. 
The next lemma gives lower and upper bounds for $M'(Y_i,i+1,i+t)$. 
\begin{lemma}\label{lem:detm1m2}
   Let $i\in\mathbb{Z}^+$ be arbitrary. There exists an integer $t_0=t_0(d,s)$ 
   such that for all $t \geq t_0$ the following holds. 
   There exist functions $m_1(t)$ and $m_2(t)$ such that 
   $m_1(t) \leq M'(Y_i, i+1, i+t) \leq m_2(t)$ uniformly over $Y_i \in B_r$. Moreover, 
   there are constants $c_1=c_1(d)$ and $c_2=c_2(d)$ such that 
   $$
      \begin{array}{cc}
         m_1(t) \geq \left\{\begin{array}{rl}
            c_1 \log t/s^d, & \textrm{for $d=2$}\\
            c_1/s^d, & \textrm{for $d \geq 3$}
            \end{array}\right.
         &
         m_2(t) \leq \left\{\begin{array}{rl}
            c_2 \log t/s^d, & \textrm{for $d=2$}\\
            c_2/s^d, & \textrm{for $d \geq 3$}
            \end{array}\right.
      \end{array}
   $$
   The bounds for
   $m_1(t)$ hold both for the case where $u$ does not move and for the case where 
   $u$ moves according to Brownian motion with variance $s^2$. 
   The bounds for $m_2(t)$ hold uniformly over $X$.
\end{lemma}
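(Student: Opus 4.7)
The plan is to decompose the expected detection count $M'(y,i{+}1,i{+}t)$ into a sum of one-step detection probabilities, and then bound each term using the Gaussian structure of the relative displacement between $v$ and $u$.

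First I will condition on $Y_i=y\in B_r$ and write
\[
M'(y,i+1,i+t) = \sum_{j=1}^{t} p_j(y),\quad\text{where } p_j(y) := \prcond{\|Y_{i+j}\|_2\le r}{Y_i=y}.
\]
The key observation is that $Y_{i+j}$ is Gaussian in each of the cases of interest. If $u$ is stationary, $Y_{i+j}$ has mean $y$ and covariance $s^2 j\, I$; if $u$ undergoes independent Brownian motion with variance $s^2$, it has mean $y$ and covariance $2s^2 j\, I$; and if $u$ follows an arbitrary deterministic trajectory $X$, it has mean $y-(x_{i+j}-x_i)$ and covariance $s^2 j\, I$. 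Writing $\sigma^2\in\{s^2,2s^2\}$ for the relevant variance parameter and $\mu_j$ for the relevant mean, we obtain
\[
p_j(y) = \int_{B_r} \frac{1}{(2\pi\sigma^2 j)^{d/2}}\exp\!\left(-\frac{\|w-\mu_j\|_2^2}{2\sigma^2 j}\right)dw.
\]

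For the upper bound $m_2(t)$, which must hold uniformly over $X$, I will bound the integrand by the peak Gaussian density $(2\pi s^2 j)^{-d/2}$, which is independent of $\mu_j$. Combined with $|B_r|=1$ this gives $p_j(y)\le (2\pi s^2 j)^{-d/2}$. Summing: for $d=2$ we get $\sum_{j=1}^t (2\pi s^2 j)^{-1}\le (1+\log t)/(2\pi s^2)=O(\log t/s^2)$, and for $d\ge 3$ the sum is dominated by $\zeta(d/2)/(2\pi s^2)^{d/2}=O(1/s^d)$. For the lower bound $m_1(t)$, available in the two zero-mean cases ($u$ fixed or Brownian), I will restrict the sum to $j\ge J_0:=\lceil r^2/\sigma^2\rceil$, so that for every $w\in B_r$ the exponent is at most $2r^2/(\sigma^2 j)\le 2$; this yields $p_j(y)\ge e^{-2}(2\pi\sigma^2 j)^{-d/2}$ uniformly in $y\in B_r$. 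Summing from $J_0$ to $t$ reproduces the same orders: $\Omega(\log t/s^2)$ in $d=2$, provided $t\ge t_0(d,s)$ is large enough that $\log(t/J_0)=\Omega(\log t)$, and $\Omega(1/s^d)$ in $d\ge 3$, provided $t$ is large enough that the partial sum has approached its limit. The dimension dichotomy is thus forced by the distinction between the divergent harmonic-like sum in $d=2$ (recurrence) and the convergent tail in $d\ge 3$ (transience).

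The main obstacle will be calibrating $J_0$ and $t_0$ to absorb the transition between the ``small $j$'' regime (where the Gaussian is concentrated well inside $B_r$ and $p_j$ is close to $1$) and the ``large $j$'' regime (where $B_r$ is tiny compared with the Gaussian spread and $p_j$ is close to its peak-density estimate), and to do so uniformly in $y\in B_r$; this is what forces $t_0$ to depend on $s$ as well as on $d$. Once these per-step Gaussian estimates are in place, the summation in $j$ is routine, and the claimed forms of $m_1(t)$ and $m_2(t)$ follow immediately.
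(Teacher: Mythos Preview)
Your proposal is correct and follows essentially the same approach as the paper: both decompose $M'$ as $\sum_{j=1}^t \E{I_j}$, bound the upper tail by the peak Gaussian density $(2\pi s^2 j)^{-d/2}$, and obtain the lower bound by restricting to $j\ge j_0=\lceil r^2/\sigma^2\rceil$ so the exponent is at most~$2$. The only (minor) difference is that for the Brownian target you invoke directly that the relative displacement has covariance $2s^2 j\,I$, whereas the paper writes out the double integral over $x_{i+j}$ and bounds it by restricting to a ball $A_j$; your shortcut is cleaner but yields the same estimate.
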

\begin{proof}
   For any $j \in [1,t]$, let $I_j$ be the indicator random variable for the event 
   that $v$ detects $u$ at time $i+j$, assuming that at time~$i$ $u$ is located 
   at the origin and $v$ is located at $Y_i$.
   Clearly, 
   $M'(Y_i,i+1,i+t)=\sum_{j=1}^t \E{I_j}$. Recall that $x_{i+j}$ is the location of $u$ at time $i+j$. Hence,
   $$
      \E{I_j} 
      = \int_{B_r} \frac{1}{(2\pi s^2j)^{d/2}}\exp\left(-\frac{\|x_{i+j}+z-Y_i\|_2^2}{2s^2 j}\right) dz
      \leq \frac{\vol{B_r}}{(2\pi s^2j)^{d/2}}
      = \frac{1}{(2\pi s^2j)^{d/2}}.
   $$
   The upper bound follows by setting $m_2(t) = \sum_{j=1}^t \frac{1}{(2\pi s^2j)^{d/2}}$. 
   Note that this upper bound holds for arbitrary~$X$.

   Now we derive the lower bound. We use the fact that $Y_i,z\in B_r$. If $u$ is non-mobile, then 
   $\|x_{i+j}\|_2=\|x_i\|_2 = 0$ (recall that we assume $x_i$ to be the origin) 
   and from the triangle inequality we obtain 
   $\|x_{i+j}+z-Y_i\|_2 \leq \|x_{i+j}\|+\|z\|_2+ \|Y_i\|_2 \leq 2r$. Thus,
   $\E{I_j} \geq \frac{1}{(2\pi s^2j)^{d/2}}\exp\left(-\frac{2r^2}{s^2 j}\right)$.
   We take $j_0$ to be the smallest integer such that $j_0 \geq r^2/s^2$, set 
   $m_1(t) = \sum_{j=j_0}^t \frac{1}{(2\pi s^2j)^{d/2}} \exp(-2)$, and the result follows since 
   $t$ is sufficiently large with respect to $j_0$.   

   If $u$ moves according to a Brownian motion with variance~$s^2$, 
   we average over $x_{i+j}$ to get
   $$
      \E{I_j} 
      = \int_{\mathbb{R}^d} \int_{B_r} \frac{1}{(2\pi s^2j)^{d/2}}
         \exp\left(-\frac{\|x_{i+j}+z-Y_i\|_2^2}{2s^2 j}\right) 
         \frac{1}{(2\pi s^2 j)^{d/2}} \exp\left(-\frac{\|x_{i+j}\|_2^2}{2s^2j}\right) dzdx_{i+j}.
   $$
   Let $a>0$ be a constant and let $A_j = B_{a s\sqrt{j}-2r}$. We set $a \geq 4$ so that 
   $a s\sqrt{j}-2r \geq a s\sqrt{j}/2$ for all $j\geq j_0=\lceil r^2/s^2\rceil$.
   We integrate over $A_j$ instead of $\mathbb{R}^d$ and then use the 
   simple bounds $\|x_{i+j}+z-Y_i\|_2 \leq a s \sqrt{j}$ and $\|x_{i+j}\|_2 \leq a s \sqrt{j}$
   for all $x_{i+j}\in A_j$ and $z,Y_i\in B_r$
   to obtain
   $$
      \E{I_j} 
      \geq \frac{\vol{A_j}\vol{B_r}}{(2\pi s^2j)^{d}}\exp\left(-a^2\right) 
      \geq \frac{a'}{(s^2j)^{d/2}},
   $$
   for some constant $a'=a'(d)$.
   Now, we set 
   $m_1(t) = \sum_{j=j_0}^t \frac{a'}{(s^2j)^{d/2}}$
   and the result follows since $t$ is sufficiently large 
   with respect to $j_0$. 
\end{proof}

\par\noindent
{\bf Remark:} The bounds for $M'(Y_i,i+1,i+t)$ change substantially from $d=2$ to $d\geq 3$, 
reflecting the dichotomy between recurrent and transient random walks in~$\mathbb{Z}^2$ 
and~$\mathbb{Z}^3$: $v$~returns to a neighborhood of~$u$ infinitely often for
$d=2$ and only finitely often for $d\geq 3$.  (Note that $M'$ measures the expected
number of returns of~$u$ to a neighborhood around~$v$ in a given time interval.)
\par\smallskip
We now use Lemma~\ref{lem:detm1m2} to derive upper and lower bounds for 
$\pr{\tau < t}$.
\begin{lemma}\label{lem:dettau}
   Let the functions $m_1$ and $m_2$ be as in Lemma~\ref{lem:detm1m2}.
   For any constant $\alpha>0$ we have
   $$
      \frac{M(0,t-1)}{1+m_2(t)}
      \leq \pr{\tau < t}
      \leq \frac{M(0,(1+\alpha)t-1)}{1+m_1(\alpha t)}.
   $$
\end{lemma}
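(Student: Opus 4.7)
The plan is to use the identity $\E{Z}=\pr{Z\geq 1}\cdot\E{Z\mid Z\geq 1}$, where $Z$ counts the number of detections of $u$ by $v$ in a time window of interest, together with the bounds on the conditional expectation of further detections supplied by Lemma~\ref{lem:detm1m2}.

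For the \emph{lower} bound on $\pr{\tau<t}$, I let $Z$ be the number of time steps $i\in[0,t-1]$ at which $v$ detects $u$, so that $\E{Z}=M(0,t-1)$ and $\{Z\geq 1\}=\{\tau<t\}$. Conditioning on the first detection time, the Markov property of the node's motion makes the relative position at time $\tau$ lie in $B_r$ by definition. Hence the expected number of additional detections in $[\tau+1,t-1]$ is at most $M'(Y_\tau,\tau+1,\tau+(t-1-\tau))\leq m_2(t)$ (using monotonicity of $m_2$, or more carefully applying the bound with the truncated window $t-1-\tau\leq t$). Summing the $1$ for the detection at time $\tau$ itself, $\E{Z\mid\tau<t}\leq 1+m_2(t)$, and the lower bound follows from $M(0,t-1)=\pr{\tau<t}\cdot\E{Z\mid\tau<t}$.

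For the \emph{upper} bound, I enlarge the window: let $Z'$ be the number of detections in $[0,(1+\alpha)t-1]$, so $\E{Z'}=M(0,(1+\alpha)t-1)$. On the event $\{\tau<t\}$, at time $\tau$ the relative position of $v$ to $u$ is again in $B_r$, and the whole interval $[\tau+1,\tau+\alpha t]$ lies inside $[0,(1+\alpha)t-1]$ because $\tau\leq t-1$. Applying Lemma~\ref{lem:detm1m2} to this interval gives an expected number of further detections at least $m_1(\alpha t)$, uniformly in the value of $Y_\tau\in B_r$ (and uniformly in the future trajectory of $u$, since the lower bound in that lemma is stated for the two relevant motions of $u$). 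Thus $\E{Z'\mid\tau<t}\geq 1+m_1(\alpha t)$, and rearranging $\E{Z'}\geq\pr{\tau<t}(1+m_1(\alpha t))$ yields the claimed upper bound.

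The main conceptual point I would want to double-check while writing this up is that the bound on $M'(Y_i,i+1,i+t)$ from Lemma~\ref{lem:detm1m2} can legitimately be applied after conditioning on $\tau=i$: the conditioning restricts $Y_i$ to $B_r$ (which is allowed, since the bounds are uniform over $Y_i\in B_r$) and it fixes the trajectory of $u$ up to time $i$, but the lower bound $m_1$ in the lemma is stated to hold both for static $u$ and for $u$ performing Brownian motion, i.e., uniformly in the subsequent trajectory, which is exactly what I need. This independence-of-past-trajectory property is the only subtle step; once it is in hand, the rest is just the two-line computation sketched above.
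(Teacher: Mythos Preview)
Your proposal is correct and is essentially identical to the paper's proof: the paper writes the decomposition $M(0,t-1)=\sum_{i=0}^{t-1}\pr{\tau=i}\bigl(1+\Espace{M'(Y_i,i+1,t-1)}{Y_i}\bigr)$ and bounds each $M'$ term above by $m_2(t)$ and (after enlarging the window to $(1+\alpha)t-1$) below by $m_1(\alpha t)$, exactly as you do. One small clarification on your final paragraph: the legitimacy of applying the $m_1$ bound after conditioning on $\tau=i$ comes from the Markov property of the Brownian motions (the future relative displacement depends only on $Y_i$), not from $m_1$ being ``uniform in the subsequent trajectory''---in fact only $m_2$ is uniform over $X$, while $m_1$ requires $u$ to be static or Brownian, which the Markov property preserves.
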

\begin{proof}
   We apply 
   the straightforward equation
   $$
      M(0,t-1) = \sum_{i=0}^{t-1} \pr{\tau = i} \left(1+\Espace{M'(Y_i,i+1,t-1)}{Y_i}\right),
   $$
   where the random variable~$Y_i$ denotes the relative location of~$v$ with respect 
   to~$u$ given that $\tau=i$.  Note that $Y_i \in B_r$ since the condition $\tau=i$ implies 
   that the distance between $v$ and $u$ at time $i$ is at most~$r$.  
   Using Lemma~\ref{lem:detm1m2} we obtain
   \begin{equation}
      M(0,t-1) 
      \leq \sum_{i=0}^{t-1} \pr{\tau=i} \left(1+\Espace{M'(Y_i,i+1,i+t)}{Y_i}\right) 
      \leq \pr{\tau < t} \left(1+m_2(t)\right).
      \label{eq:detbound1}
   \end{equation}
   Also, since $M(0,t-1)$ is non-decreasing with $t$, 
   we can take an arbitrary constant $\alpha>0$ and use the fact that 
   $(1+\alpha)t-1 \geq i+\alpha t$ for all $i\in [0,t-1]$ together with Lemma~\ref{lem:detm1m2}
   to write
   \begin{eqnarray}
      M(0,(1+\alpha)t-1) 
      &\geq& \sum_{i=0}^{t-1} \pr{\tau=i} \left(1+\Espace{M'(Y_i,i+1,(1+\alpha)t-1)}{Y_i}\right) \nonumber\\
      &\geq& \sum_{i=0}^{t-1} \pr{\tau=i} \left(1+\Espace{M'(Y_i,i+1,i+\alpha t)}{Y_i}\right)  \nonumber\\
      &\geq& \pr{\tau < t} \left(1+m_1(\alpha t)\right).
      \label{eq:detbound2}
   \end{eqnarray}
   The proof is completed by reorganizing (\ref{eq:detbound1}) and (\ref{eq:detbound2}).
\end{proof}
\ignore{
   Our main tool in this section will be 
   the straightforward equation
   \begin{equation}
      M(0,t-1) = \sum_{i=0}^{t-1} \pr{\tau = i} \left(1+\Espace{M'(Y_i,i+1,t-1)}{Y_i}\right),
      \label{eq:detmain}
   \end{equation}
   where the random variable~$Y_i$ denotes the relative location of~$v$ with respect 
   to~$u$ given that $\tau=i$.  Note that $Y_i \in B_r$ since the condition $\tau=i$ implies 
   that the distance between $v$ and $u$ at time $i$ is at most~$r$.  We are going to 
   derive bounds for $M'(Y_i,i+1,t-1)$ that depend only on $t-i-1$. More formally, we are going 
   to obtain functions $m_1(t)$ and $m_2(t)$ such that $m_1(t) \leq M'(Y_i,i+1,i+t) \leq m_2(t)$ 
   for all $Y_i\in B_r$ and all~$i$. Plugging these bounds into~\eqref{eq:detmain} gives
   \begin{equation}
      M(0,t-1) 
      \leq \sum_{i=0}^{t-1} \pr{\tau=i} \left(1+\Espace{M'(Y_i,i+1,i+t)}{Y_i}\right) 
      \leq \pr{\tau < t} \left(1+m_2(t)\right).
      \label{eq:detbound1}
   \end{equation}
   Also, since $M(0,t-1)$ is non-decreasing with $t$, 
   we can take an arbitrary constant $\alpha>0$ and use the fact that 
   $(1+\alpha)t-1 \geq i+\alpha t$ for all $i\in [0,t-1]$ to write
   \begin{eqnarray}
      M(0,(1+\alpha)t-1) 
      &\geq& \sum_{i=0}^{t-1} \pr{\tau=i} \left(1+\Espace{M'(Y_i,i+1,(1+\alpha)t-1)}{Y_i}\right) \nonumber\\
      &\geq& \sum_{i=0}^{t-1} \pr{\tau=i} \left(1+\Espace{M'(Y_i,i+1,i+\alpha t)}{Y_i}\right)  \nonumber\\
      &\geq& \pr{\tau < t} \left(1+m_1(\alpha t)\right).
      \label{eq:detbound2}
   \end{eqnarray}
   From (\ref{eq:detbound1}) and (\ref{eq:detbound2}) we obtain the bounds
   \begin{equation}
      \frac{M(0,t-1)}{1+m_2(t)}
      \leq \pr{\tau < t}
      \leq \frac{M(0,(1+\alpha)t-1)}{1+m_1(\alpha t)},
      \label{eq:detmainineq}
   \end{equation}
   valid for all $\alpha >0$. Now we proceed to derive bounds for $M(0,t-1)$ and $M'(Y_i,i+1,i+t)$.
}

We are now in position to conclude the proof of Theorem~\ref{thm:detection}.
Plugging Lemmas~\ref{lem:detm} and~\ref{lem:detm1m2} into Lemma~\ref{lem:dettau}, and using 
Lemma~\ref{lem:det1step}, we obtain
a constant $t_0=t_0(d,s)$, and 
constants $c_1,c_2,c_3,c_4$ depending only on~$d$, such that for all $t\geq t_0$ and 
sufficiently large~$L$,
\ignore{
   We are now in position to conclude the proof of Theorem~\ref{thm:detection}.
   Plugging Lemmas~\ref{lem:detm} and~\ref{lem:detm1m2} into~(\ref{eq:detmainineq}), we obtain
   from~(\ref{eq:deteq}) and~(\ref{eq:detlb}) that there exists a constant $t_0=t_0(d,s)$, and 
   constants $c_1,c_2,c_3,c_4$ depending only on~$d$, such that for all $t\geq t_0$ and 
   sufficiently large~$L$
}
\begin{equation}
   \exp\left(-\frac{c_1 \lambda s^2 t}{s^2 + c_2 \log t}\right)
   \leq \pr{\tdet(Q_L) \geq t}
   \leq \exp\left(-\frac{c_3 \lambda s^2 t}{s^2 + c_4 \log t}\right)
   \label{eq:detfinal1}
\end{equation}
for $d=2$, and 
\begin{equation}
   \exp\left(-c_1 \lambda s^d t\right)
   \leq \pr{\tdet(Q_L) \geq t} 
   \leq \exp\left(-c_3 \lambda s^d t\right)
   \label{eq:detfinal2}
\end{equation}
for $d\geq 3$. Theorem~\ref{thm:detection} then follows by taking the limit as $L\to\infty$.

\par\bigskip\noindent
{\bf Remark:}
As should be clear from the proof,
the upper bounds in (\ref{eq:detfinal1}) and (\ref{eq:detfinal2}) hold for arbitrary locations 
of~$u$ as long as $u$ moves independently of the locations of the nodes of~$G$. 
The lower bounds also hold in more generality: e.g.,
if $u$ moves according to Brownian motion with variance ${s'}^2\neq s^2$, 
or indeed with any motion that has sufficiently large ``variance" in all directions.
(Specifically, the lower bounds hold if the density $f'_i(\cdot,\cdot)$ for the motion of~$u$
after $i$ steps satisfies the following property: there exist positive constants $a_1=a_1(d)$ 
and $a_2=a_2(d)$ such that $f_i'(x,x+z) \geq a_1/i^{d/2}$ for all 
$z\in B_{a_2 \sqrt{i}}$, $x\in\mathbb{R}^d$, and $i \in \mathbb{Z}^+$.)
On the other hand, adding a random drift to the nodes in $G$ can change the detection time 
substantially: if each node $v$ of $G$ moves according to Brownian motion with 
drift $\mu_v$ and variance $s^2$, where the $\mu_v$ are i.i.d.\ random variables, 
then our proof can be adapted to show that under mild conditions\footnote{Note that this statement
cannot hold in full generality; if all nodes of $G$ have the same drift, then this is equivalent to the case without drift up to translations of $\mathbb{R}^d$.}
on the distribution of~$\mu_v$,
$\pr{\tdet \geq t}= \exp\left(-\Theta(t)\right)$ in all dimensions $d\geq 2$ for arbitrary locations of $u$. 
We omit the details.
\ignore{
   This also shows that the lower bounds in (\ref{eq:detfinal1}) and (\ref{eq:detfinal2}) do not hold for arbitrary locations of~$u$.
   As an example, if $\mu_v$ is a vector with uniformly random direction and fixed length $\gamma$, for some parameter $\gamma>0$, then 
   the lower bounds follow by changing the definition of 
   $A_i$ in (\ref{eq:mlb}) to $A_i = \mu_v i + B_{s\sqrt{i+1}}$ and adapting (\ref{eq:proofm1m2}) to
   $$
      \E{I_j} 
      = \int_{\partial B_{\gamma j}} \int_{B_r} \frac{1}{\vol{\partial B_{\gamma j}}} \frac{1}{(2\pi s^2 j)^{d/2}}\exp\left(-\frac{\|x_{i+j}+z-Y_i-k\|_2}{2s^2 j}\right)dzdk,
   $$
   where $\partial B_{\gamma j}$ is the surface of $B_{\gamma j}$. Note that $\vol{\partial B_{\gamma j}} = a_3 (\gamma j)^{d-1}$ for some 
   constant $a_3=a_3(d)$. We only need to show that the sum of the right hand side above over $j$ converges. Take $\epsilon>0$ and 
   let $A_j'\subseteq \partial B_{\gamma j}$ be the region of volume $j^{d-1-\epsilon}$ 
   containing the closest points to $x_{i+j}$. We write 
   \begin{eqnarray*}
      \lefteqn{\E{I_j} 
      = \int_{A} \int_{B_r} \frac{1}{\vol{\partial B_{\gamma j}}} \frac{1}{(2\pi s^2 j)^{d/2}}\exp\left(-\frac{\|x_{i+j}+z-Y_i-k\|_2}{2s^2 j}\right)dzdk}\\
      & & + \int_{\partial B_{\gamma j}\setminus A} \int_{B_r} \frac{1}{\vol{\partial B_{\gamma j}}} \frac{1}{(2\pi s^2 j)^{d/2}}\exp\left(-\frac{\|x_{i+j}+z-Y_i-k\|_2}{2s^2 j}\right)dzdk
   \end{eqnarray*}
   Using the fact that the exponential term is always smaller than $1$, we obtain that the first term is smaller than 
   $\frac{a_4 \vol{A}}{\vol{\partial B_{\gamma j}}(s^2j)^{d/2}}
   \leq \frac{a_4' j^{d-1-\epsilon}}{a_3 (\gamma j)^{d-1}(s^2j)^{d/2}}$,
   for some constants $a_4,a_4'$. The sum of this term over $j$ converges for all $d\geq 2$. For the second term we notice that 
   $\|x_{i+j}-k\|_2 \geq j^{1-\delta}$ for some $\delta = \delta(d,\epsilon)$ from the choice of $A$ and, 
   therefore, the exponential is at most $\exp(-a_5 j^{1-2\delta})$ and the sum
   over $j$ also converges for all $d\geq 2$.
}
\section{Percolation time}
\label{sec:percolation}
In this section we prove Theorem~\ref{thm:percolation}.
We consider a MGG~$G$ with density $\lambda>\lambda_c$ (i.e., above the
percolation threshold), and study the random variable $\tperc$ defined as the first 
time at which a node~$u$ initially placed at the origin independently of the nodes of~$G$
belongs to the infinite component of $G$.
We derive an upper bound for the tail $\pr{\tperc \geq t}$ as $t\to\infty$.

We begin by stating a proposition that will be a key ingredient in our analysis. 
We consider a large cube $Q_K \subset \mathbb{R}^d$ and tessellate it into small
cubes called ``cells."
The proposition says that, if all cells have sufficiently many nodes at a given time $i$, 
then at time $i+\Delta$ for suitably large~$\Delta$ the point process induced by
the location of the nodes contains a fresh Poisson point process with only slightly reduced
intensity inside a smaller cube~$Q_{K'}$.
We believe this result is of independent interest.  With this in mind, we state
the proposition below for a slightly more general setting than is needed here.
Its proof is deferred to the end of the section.

\begin{proposition}\label{pro:coupling}
   Fix $K > \ell >0$ and consider the cube $Q_K$ tessellated into cells of side-length~$\ell$.
   Let $\Pi_0$ be an arbitrary point process at time $0$ 
   that contains at least $\beta \ell^d$ nodes in each cell of the tessellation for some $\beta>0$.
   Let $\Pi_\Delta$ be the point process obtained at time $\Delta$ from $\Pi_0$ by allowing 
   the nodes to move according to Brownian motion with variance~$s^2$.
   Fix $\epsilon\in(0,1)$ and let $\Xi$ be a fresh Poisson point process with
   intensity $(1-\epsilon)\beta$. Then there exists a coupling of~$\Xi$ and~$\Pi_\Delta$ and
   constants $c_1,c_2,c_3$ depending only on $d$ such that, if 
   $\Delta \geq \frac{c_1 \ell^2}{s^2 \epsilon^2}$ and $K' \le K - c_2 s \sqrt{\Delta \log \epsilon^{-1}}>0$,
   the nodes of $\Xi$ are a subset of the nodes of $\Pi_\Delta$ inside the cube $Q_{K'}$ with probability
   $
      1-\frac{K^d}{\ell^d}\exp(-c_3\epsilon^2\beta\ell^d).
   $
\end{proposition}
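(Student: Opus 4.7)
The plan rests on the observation that when $\Delta \gg \ell^2/s^2$, the Brownian transition density $f_\Delta(x,y)$ varies only slightly as $x$ ranges over a single cell of side~$\ell$. This smoothing allows us to treat the $n_C\ge\beta\ell^d$ arbitrarily-placed nodes of each cell $C$ as if they were distributed according to a common cell-averaged density, which is what ultimately enables Poissonization.

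First I would prove a heat-kernel regularity estimate: for $\Delta \ge c_1\ell^2/(s^2\epsilon^2)$, every cell $C$ with center $x_C$, every $x\in C$, and every $y\in\rset^d$,
$$
f_\Delta(x,y)\;\ge\;(1-\epsilon/4)\,\psi_C(y),\qquad \text{where}\quad \psi_C(y):=f_\Delta(x_C,y),
$$
which follows from the identity $|\nabla_x\log f_\Delta(x,y)|=\|y-x\|/(s^2\Delta)$ together with an elementary Gaussian tail bound handling large $\|y-x\|$. Using this, for each node $v$ in cell $C$ I would decompose its time-$\Delta$ density as $f_\Delta(x_v,y)=(1-\epsilon/4)\psi_C(y)+r_v(y)$ with $r_v\ge 0$, and sample $v$'s endpoint by a $(1-\epsilon/4)$-biased coin flip: on heads, draw from $\psi_C$ (``clean'' node); on tails, draw from $r_v/\int r_v$. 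Clean endpoints within a single cell are then i.i.d.\ draws from $\psi_C$ and mutually independent across cells. A Chernoff bound applied to the binomial count $M_C$ of clean nodes in $C$ gives $M_C\ge(1-\epsilon)\beta\ell^d$ with probability $1-\exp(-c_3\epsilon^2\beta\ell^d)$, and a further thinning step couples $M_C$ with a Poisson count $\widetilde M_C$ of mean $(1-2\epsilon)\beta\ell^d$ (with the same sort of failure probability). Conditional on $\widetilde M_C$, the surviving clean endpoints in $C$ are i.i.d.\ draws from $\psi_C$, so by the standard characterization they form a genuine Poisson process on $\rset^d$ with intensity $(1-2\epsilon)\beta\ell^d\,\psi_C$.

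Superposing these independent per-cell Poisson processes produces a Poisson process on $\rset^d$ of intensity $(1-2\epsilon)\beta\sum_C\ell^d\psi_C(y)$. Since $\sum_C\ell^d\psi_C(y)$ is a Riemann sum for $\int_{Q_K}f_\Delta(x,y)\,dx$ and a standard Gaussian tail bound gives $\int_{Q_K}f_\Delta(x,y)\,dx\ge 1-\epsilon/2$ for every $y\in Q_{K'}$ provided $K'\le K-c_2 s\sqrt{\Delta\log\epsilon^{-1}}$, a final thinning delivers the target homogeneous Poisson process $\Xi$ of intensity $(1-\epsilon)\beta$ as a subset of $\Pi_\Delta$ on $Q_{K'}$ (after absorbing the constant-factor rescaling of $\epsilon$ into $c_1$, $c_2$, and $c_3$). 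A union bound over the at most $(K/\ell)^d$ cells of the tessellation produces the failure probability $(K^d/\ell^d)\exp(-c_3\epsilon^2\beta\ell^d)$ stated in the proposition.

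The hard step, I expect, is the Poissonization of the binomial count $M_C$ in the second paragraph. Because the per-node success probability $1-\epsilon/4$ is bounded away from zero, Le-Cam-style Poisson approximation does not apply, and one must execute an explicit coupling of the binomial with a smaller-mean Poisson via a two-sided Chernoff argument. The heat-kernel regularity estimate is doing the real work here: without it the clean endpoints in a cell would not share a common distribution, and the Poissonization of the per-cell contribution would fail.
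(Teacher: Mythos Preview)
Your overall strategy is sound and interestingly different from the paper's, but the heat-kernel step has a gap. The mixture decomposition $f_\Delta(x_v,y)=(1-\epsilon/4)\psi_C(y)+r_v(y)$ with $r_v\ge 0$ requires the \emph{pointwise} inequality $f_\Delta(x,y)\ge(1-\epsilon/4)f_\Delta(x_C,y)$ for every $y\in\rset^d$, and with your choice $\psi_C=f_\Delta(x_C,\cdot)$ this is false: for $y=x_C+t(x_C-x)/\|x_C-x\|$ one has $\|y-x\|=t+\|x-x_C\|$ and $\|y-x_C\|=t$, so
\[
\frac{f_\Delta(x,y)}{f_\Delta(x_C,y)}=\exp\Bigl(-\frac{2t\,\|x-x_C\|+\|x-x_C\|^2}{2s^2\Delta}\Bigr)\longrightarrow 0\quad\text{as }t\to\infty.
\]
A Gaussian tail bound controls the \emph{mass} of the region where the inequality fails, but cannot repair a pointwise sign condition; your sampling rule ``on tails, draw from $r_v/\!\int r_v$'' is undefined wherever $r_v<0$. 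The fix is to replace $\psi_C$ by a genuine common minorant of $\{f_\Delta(x,\cdot):x\in C\}$, e.g.\ $\psi_C(y)=(2\pi s^2\Delta)^{-d/2}\exp\bigl(-(\|y-x_C\|+\sqrt d\,\ell)^2/(2s^2\Delta)\bigr)$; the content of the heat-kernel estimate then becomes $\int\psi_C\ge 1-\epsilon/4$ once $\Delta\ge c_1\ell^2/(s^2\epsilon^2)$, which follows by a first-order expansion in $\ell/(s\sqrt\Delta)$. With this correction your argument goes through, though $\sum_C\ell^d\psi_C(y)$ is then no longer a Riemann sum for $\int_{Q_K}f_\Delta(x,y)\,dx$ and needs its own (easy) lower bound.

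For comparison, the paper sidesteps what you call the ``hard step'' by Poissonizing \emph{before} the motion rather than after. It lays down a fresh Poisson process $\Xi'_0$ of intensity $(1-\epsilon/2)\beta$ on $Q_K$, uses a Chernoff bound to show that with the stated failure probability every cell of $\Xi'_0$ has no more nodes than the corresponding cell of $\Pi_0$, and then matches each $\Xi'_0$-node to a distinct $\Pi_0$-node in the same cell. The two paired endpoints at time $\Delta$ are coupled via exactly the sub-density above (which, crucially, does not depend on the unknown $\Pi_0$-partner's location), so thinning $\Xi'_0$ by the coupling failures preserves the Poisson property and one passes directly to a homogeneous Poisson process on $Q_{K'}$ with no binomial-to-Poisson step. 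Your route trades that device for a more explicit per-node decomposition plus an additional coupling; both reach the same destination once the pointwise issue is repaired.
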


Now we proceed to the proof of Theorem~\ref{thm:percolation}.
We first take a sufficiently small parameter $\xi>0$ 
such that $(1-\xi)^2 \lambda > \lambda_\critperc$.  (This is always possible as we
are assuming $\lambda > \lambda_\critperc$.)  
In what follows, we omit the dependencies of other
parameters on $\lambda$ and $\xi$ as we are considering them to be fixed. 

Let $H_i$ be the event that $u$ does \emph{not} belong to the infinite component at time $i$. 
Then, the event $\{\tperc \geq t \}$ is equivalent to $\bigcap_{i=0}^{t-1} H_i$. We define an integer
parameter $\Delta \geq 1$ and consider the process obtained by skipping every $\Delta$ 
time steps. (To simplify the notation we assume w.l.o.g.\ that $t/\Delta$ is an integer.) 
In other words,
instead of looking at the event $\bigcap_{i=0}^{t-1} H_i$ we consider the event 
$\bigcap_{i=0}^{t/\Delta-1} H_{\Delta i}$, which we henceforth denote by $\mathcal{H}_t$.
Since the occurrence of the event $\{\tperc \geq t\}$ implies 
$\mathcal{H}_t$ we have $\pr{\tperc \geq t} \leq \pr{\mathcal{H}_t}$.
Our goal in introducing~$\Delta$ is to allow nodes to move further between consecutive time steps;
we will choose the value of~$\Delta$ later.  

Let $C=C(d) \geq 1$ be a sufficiently large constant and
fix $L=C t(1+s)$. We will confine our attention to the cube~$Q_{2L}$. 
We take a parameter $\ell>0$ and 
tessellate $Q_{2L}$ into cubes of side-length~$\ell$ (see Figure~\ref{fig:tess}(a)). 
We refer to each such cube as a ``cell."
Later we will tie together the values of $\ell$ and $\Delta$, and will choose~$\ell$
to optimize our upper bound for $\pr{\mathcal{H}_t}$. 
For the moment we only assume that the tessellation is non-trivial in the 
sense that both~$\ell$ and $L/\ell$ are $\omega(1)$ as functions of~$t$.

For each time step $i$, the expected number of nodes inside a given cell is $\lambda \ell^d$.
We say that a cell is \textit{dense} at time $i$ if it contains 
at least $(1-\xi)\lambda \ell^d$ nodes, where $\xi$ is as defined earlier.
Let $D_i$ be the event that \emph{all} cells are dense at time $i$, and let 
$\mathcal{D}_{t} = \bigcap_{i=0}^{t/\Delta-1} D_i$. 
The lemma below shows that 
$\mathcal{D}_{t}$ occurs with high probability. 
\begin{figure}[ht]
   \begin{center}
   \input{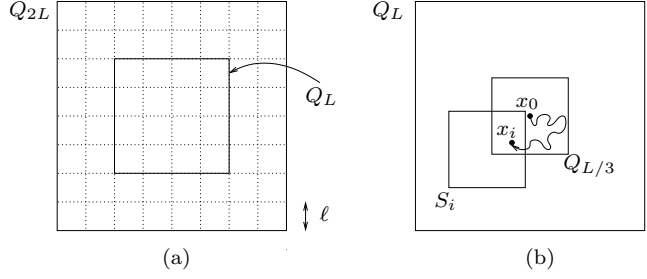}
   \caption{(a)~The cubes $Q_{2L}$ and $Q_L$ and the tessellation of $Q_{2L}$ into cubes of 
   side-length $\ell$. (b)~The cube $Q_{L/3}$, the locations $x_0$ and $x_i$ of node $u$ 
   at time steps $0$ and $i$ respectively, and the cube $S_i$.}
   \label{fig:tess}
   \end{center}
\end{figure}

\begin{lemma}\label{lem:density}
   With the above notation,
   $
      \pr{\mathcal{D}_{t}} 
      \geq 1 - \frac{tL^d}{\Delta \ell^d}\exp\left(-\xi^2 \lambda \ell^d/2\right).
   $
\end{lemma}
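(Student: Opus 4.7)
The plan is a direct union bound over cells and time steps, built on two ingredients already assembled in the paper: (i) the stationarity of the MGG, which says that each $\Pi_{\Delta i}$ is itself a Poisson point process of intensity $\lambda$ on $\rset^d$ (a consequence of the mass transport / superposition argument sketched in Section~\ref{sec:prelims}); and (ii) a standard lower-tail Chernoff bound for Poisson random variables.

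Concretely, I would first fix a single time step $\Delta i$ and a single cell $C$ of the tessellation. By stationarity of the MGG, the number $N_{i,C}$ of nodes of $\Pi_{\Delta i}$ that lie in $C$ is Poisson with mean $\mu := \lambda \ell^d$, so the standard Chernoff bound gives
\[
\pr{N_{i,C} < (1-\xi)\lambda \ell^d} \;\leq\; \exp\!\left(-\xi^2 \lambda \ell^d / 2\right).
\]
The number of cells in the tessellation of $Q_{2L}$ is $(2L/\ell)^d = O(L^d/\ell^d)$, and the number of time steps under consideration is $t/\Delta$. A union bound over these two indices gives
\[
\pr{\mathcal{D}_t^{\complement}} \;\leq\; \frac{t}{\Delta}\cdot\frac{(2L)^d}{\ell^d}\cdot\exp\!\left(-\xi^2\lambda\ell^d/2\right),
\]
which, after absorbing the constant $2^d$ into $L$ (or equivalently, into the choice of the constant $C$ that defines $L = Ct(1+s)$), is the claimed bound.

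There is no real obstacle here: the only point to be careful about is that $D_i$ involves \emph{all} cells simultaneously at the same time $\Delta i$, and that the events $D_i$ for different $i$ are correlated (since the same underlying Brownian motions drive the evolution). The union bound sidesteps both issues, since it only uses the marginal distribution of $\Pi_{\Delta i}$ at each fixed time. No coupling or independence across time is needed; Proposition~\ref{pro:coupling} is not invoked at this stage — it will be used later to convert the events $D_i$ into effectively independent Poisson point processes. Thus the lemma reduces to observing Poisson marginals and applying Chernoff plus a union bound.
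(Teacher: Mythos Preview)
Your proposal is correct and follows essentially the same approach as the paper's proof: apply the Poisson lower-tail Chernoff bound (Lemma~\ref{lem:cbpoisson}) to a single cell at a single time, then take a union bound over the $(2L/\ell)^d$ cells and $t/\Delta$ time steps. You are in fact slightly more careful than the paper about the stray $2^d$ factor (the paper writes $(L/\ell)^d$ cells), and your remark that the union bound sidesteps the time-correlations is correct and to the point.
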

\begin{proof}
   At any given step $i$, by a standard large deviation bound for a Poisson 
   r.v.\ (cf.\ Lemma~\ref{lem:cbpoisson}),
   a cell has more than $(1-\xi)\lambda \ell^d$ nodes with probability 
   at least $1-\exp(-\xi^2\lambda\ell^d/2)$. The proof is completed by taking the 
   union bound over all $(L/\ell)^d$ cells and $t/\Delta$ time steps.
\end{proof}

Recall that $x_i$ is the location of $u$ at time $i$.
Define $E_i$ as the event that $x_i$ is located inside $Q_{L/3}$ at time $i$, and let 
$\mathcal{E}_t = \bigcap_{i=0}^{t/\Delta-1} E_{\Delta i}$. The next lemma bounds the probability 
that $u$ never leaves $Q_{L/3}$. 
\begin{lemma}\label{lem:escape}
   There exists a constant $c=c(d)$ such that 
   $
      \pr{\mathcal{E}_t} 
      \geq 1 - \exp\left(-c t\right).
   $
\end{lemma}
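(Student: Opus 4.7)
The plan is to reduce to a Gaussian tail bound on each sampled location and then apply a union bound. Recall that $u$ starts at the origin and either is stationary (in which case $\mathcal{E}_t$ holds trivially) or moves by a $d$-dimensional Brownian motion of variance $s^2$; I focus on the latter case. For any fixed sampled time $j=\Delta i$ with $0\le i\le t/\Delta-1$, the position $x_j$ is a $d$-dimensional Gaussian vector with mean~$0$ and per-coordinate variance $s^2 j\le s^2 t$. The event $E_j^c$ requires $\|x_j\|_\infty> L/6$, i.e., some coordinate exceeds $L/6=Ct(1+s)/6$ in absolute value.

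By a standard one-dimensional Gaussian tail estimate together with a union bound over the $d$ coordinates,
\[
\pr{E_j^c}\ \le\ 2d\exp\!\left(-\frac{(L/6)^2}{2s^2 j}\right)
\ \le\ 2d\exp\!\left(-\frac{L^2}{72\, s^2\, t}\right)
\ =\ 2d\exp\!\left(-\frac{C^2 t\,(1+s)^2}{72\, s^2}\right),
\]
where I used $j\le t$ in the second inequality. Since $(1+s)^2/s^2\ge 1$ for every $s>0$, the right-hand side is at most $2d\exp(-C^2 t/72)$, a bound in which the dependence on~$s$ has been absorbed.

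Summing over the $t/\Delta\le t$ sampled times via a union bound yields
\[
\pr{\mathcal{E}_t^{c}}\ \le\ 2dt\exp\!\left(-\tfrac{C^2}{72}\,t\right).
\]
Choosing the constant $C=C(d)$ large enough so that, say, $\tfrac{C^2}{72}\ge \log(4d)+2$, this is bounded by $\exp(-ct)$ for some $c=c(d)>0$, which is the claimed estimate. There is no real obstacle here: the inequality is simply a quantitative statement that a Brownian motion of variance $s^2$ run for at most $t$ steps is extremely unlikely to leave a box whose side length is $\Theta(t(1+s))$, and the parameter $C$ introduced in the definition $L=Ct(1+s)$ is precisely there to make the Gaussian tail beat the polynomial union-bound factor.
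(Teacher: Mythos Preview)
Your proof is correct and follows essentially the same approach as the paper: a Gaussian tail bound on each coordinate of $x_j$, a union bound over the $d$ coordinates, and then a union bound over the at most $t$ sampled times, using $L=Ct(1+s)$ to make the exponential decay beat the polynomial factor. The only cosmetic differences are that the paper invokes its Lemma~\ref{lem:cbnormal} (the Mill's-ratio-type bound with the $\sigma/x$ prefactor) rather than the simpler Chernoff-type Gaussian tail you use, and it phrases the final step as ``$\Delta\ge 1$ and $L\ge st$'' instead of your explicit inequality $(1+s)^2/s^2\ge 1$.
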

\begin{proof}
   We fix a time step $i$ and then apply the union bound over time. The event $E_i$ corresponds to $u$ 
   not moving a distance more than $L/6$ in any dimension. Therefore,
   \begin{eqnarray*}
      \pr{E_i} 
      &=& \int_{Q_{L/3}} f_{i}(0,x_i)dx_i\\
      &=& \left[1 - 2\int_{L/6}^\infty \frac{1}{\sqrt{2\pi s^2 i}} \exp\left(-\frac{y^2}{2 s^2 i}\right)dy\right]^d
      \geq 1 - 2d\int_{L/6}^\infty \frac{1}{\sqrt{2\pi s^2 i}} \exp\left(-\frac{y^2}{2 s^2 i}\right)dy.
   \end{eqnarray*}
   Then we use a standard large deviation bound for the Normal distribution (see Lemma~\ref{lem:cbnormal}) to conclude that
   $$
      \pr{E_i} 
      \geq 1 - \frac{12ds\sqrt{i}}{\sqrt{2\pi} L} \exp\left(-\frac{L^2}{72 s^2i}\right).
   $$
   Since the bound above decreases with $i$ we can conclude that 
   $$
      \pr{\mathcal{E}_t} \geq 1 - \frac{t}{\Delta}\frac{12d s\sqrt{t}}{\sqrt{2\pi} L} \exp\left(-\frac{L^2}{72 s^2t}\right),
   $$
   and the result follows from $\Delta \geq 1$ and $L \geq st$.
\end{proof}

For each time step~$i$, we define~$S_i$ to be the cube~$Q_{L/3}$
shifted randomly so that $x_i$ (the location of~$u$ at time~$i$)
is uniformly random in~$S_i$ (see Figure~\ref{fig:tess}(b)).
A {\it crossing component\/} of $S_i$ is a connected set of nodes within~$S_i$
that contains a path connecting every pair of opposite faces of~$S_i$.  (A path
connects two faces of $S_i$ if each face is within distance~$r$ 
of one of its endpoints.)

For each~$i$,
let $K_i$ be the event that all the crossing components of $S_i$ are contained in
the infinite component at time~$i$. 
(For definiteness we assume that $K_i$ holds if $S_i$ has no crossing component.)
Let $\mathcal{K}_t=\bigcap_{i=0}^{t/\Delta-1} K_{\Delta i}$.
The next lemma follows by a result of Penrose and Pisztora~\cite[Theorem~1]{PenPis}.
\begin{lemma}\label{lem:pisztora}
   For any $\lambda>\lambda_\critperc$, there exists a constant $c=c(d)$ such that
   $
      \pr{\mathcal{K}_t} 
      \geq 1 - \exp(-ct^{d-1}).
   $
\end{lemma}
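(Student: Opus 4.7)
The plan is to apply the Penrose--Pisztora theorem at each of the $t/\Delta$ sampled time steps and union bound. Recall that $L/3 = \Theta(t)$, so $S_i$ is a cube whose side length grows linearly with $t$.

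First, I would fix a single time step $\Delta i$. Since the motion of the target $u$ is independent of the nodes of $G$, I may condition on the location $x_{\Delta i}$, which fixes the cube $S_{\Delta i}$ as a deterministic translate of $Q_{L/3}$. As noted earlier in the paper, $\Pi_{\Delta i}$ is, in isolation, a Poisson point process of intensity $\lambda > \lambda_\critperc$. The Penrose--Pisztora result \cite{PenPis} then says that in any supercritical Poisson Boolean model on a box $Q_{L/3}$, the probability that some crossing component of $Q_{L/3}$ fails to lie inside the unique infinite component decays as $\exp(-c_1 (L/3)^{d-1})$, for a constant $c_1 = c_1(d,\lambda) > 0$. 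By translation invariance this bound applies to $S_{\Delta i}$, and since it does not depend on the conditioning on $x_{\Delta i}$, it holds unconditionally:
\[
   \pr{K_{\Delta i}^\compl} \leq \exp\bigl(-c_1 (L/3)^{d-1}\bigr) \leq \exp\bigl(-c_2 t^{d-1}\bigr)
\]
for a suitable $c_2 = c_2(d,\lambda) > 0$.

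Next I would take a union bound over the $t/\Delta \leq t$ sampled time steps, yielding
\[
   \pr{\mathcal{K}_t^\compl} \leq \frac{t}{\Delta} \exp\bigl(-c_2 t^{d-1}\bigr) \leq t \exp\bigl(-c_2 t^{d-1}\bigr).
\]
Since $d \geq 2$, the polynomial prefactor $t$ is absorbed by halving the exponent, and for all sufficiently large $t$ this is bounded by $\exp(-c t^{d-1})$ for a constant $c = c(d,\lambda) > 0$; adjusting the constant for small $t$ gives the bound uniformly in $t$.

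There is really no main obstacle here: the only subtle point is ensuring that the per-step bound from \cite{PenPis} is uniform across the sampled times. This follows from two facts already established in the paper: (i) the marginal law of $\Pi_{\Delta i}$ is the same supercritical Poisson point process for every $i$, and (ii) the motion of $u$ is independent of $G$, so conditioning on $u$'s trajectory does not disturb the distribution of $G$ inside the (now deterministic) translate $S_{\Delta i}$. Once these are in hand, the lemma follows by the union bound as above.
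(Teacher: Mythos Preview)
Your proposal is correct and follows essentially the same approach as the paper: apply \cite[Theorem~1]{PenPis} at a single time step (exploiting that each $\Pi_{\Delta i}$ is marginally a supercritical Poisson point process), then take a union bound over the $t/\Delta$ steps and absorb the polynomial prefactor using $L\ge t$. One minor imprecision: conditioning on $x_{\Delta i}$ alone does not fix $S_{\Delta i}$, since by definition $S_{\Delta i}$ involves an additional independent random shift; but conditioning on both $x_{\Delta i}$ and that shift makes $S_{\Delta i}$ deterministic, and your translation-invariance argument then goes through unchanged.
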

\begin{proof}
   By stationarity we know that $\pr{K_i}$ is the same for all $i$. For any fixed~$i$, 
   \cite[Theorem~1]{PenPis} gives that
   $\pr{K_i} \geq 1 - \exp(-c'L^{d-1})$ for some constant $c'$. 
   (In fact, \cite[Theorem~1]{PenPis} handles an event more restrictive than $K_i$, which among other things considers
   \emph{unique} crossing components of $S_i$.)
   Using the union bound we obtain
   $\pr{\mathcal{K}_t} \geq 1- (t/\Delta)\exp(-c'L^{d-1})$ and the result follows since $L \geq t$.   
\end{proof}

We now proceed to derive a bound on $\pr{\mathcal{H}_{t}}$.
We take $H'_i$ to be the event that $u$ does not belong to a crossing component of $S_i$ at time $i$, and define 
$\mathcal{H}'_t = \bigcap_{i=0}^{t/\Delta-1} H'_{\Delta i}$. Note that $H'_i$ is a {\it decreasing\/} 
event, in the sense that if  $H'_i$ occurs then it also occurs after removing any arbitrary collection 
of nodes from the MGG~$G$.
Clearly $\mathcal{H}_t \cap \mathcal{K}_t \subseteq \mathcal{H}'_t \cap \mathcal{K}_t$. 
By elementary probability,
\begin{equation}
   \pr{\mathcal{H}_t} 
   \leq \prcond{\mathcal{H}'_t \cap \mathcal{D}_t}{\mathcal{E}_t} + \pr{\mathcal{D}_t^\compl} 
   + \pr{\mathcal{E}_t^\compl} + \pr{\mathcal{K}_t^\compl}.
   \label{eq:percinparts}
\end{equation}
\ignore{
   We proceed to derive a bound on $\pr{\mathcal{H}_{t}}$.
   By elementary probability we have
   \begin{eqnarray*}
      \pr{\mathcal{H}_{t}} 
      &=& \pr{\mathcal{H}_{t} \cap \mathcal{D}_{t} \cap \mathcal{K}_{t} \cap \mathcal{E}_t} + \pr{\mathcal{H}_t \cap (\mathcal{D}_t \cap \mathcal{K}_t \cap \mathcal{E}_t)^\compl}\\
      &\leq& \pr{\mathcal{H}_t \cap \mathcal{D}_t \cap \mathcal{K}_t \cap \mathcal{E}_t} + \pr{\mathcal{D}_t^\compl} + \pr{\mathcal{K}_t^\compl} + \pr{\mathcal{E}_t^\compl}.
   \end{eqnarray*}
   Now we take $H'_i$ to be the event that $u$ does not belong to a crossing component of $S_i$ at time $i$, and define 
   $\mathcal{H}'_t = \bigcap_{i=0}^{t/\Delta-1} H'_{\Delta i}$. Note that $H'_i$ is a {\it decreasing\/} 
   event, in the sense that if $H'_i$ occurs then it also occurs after removing any arbitrary collection 
   of nodes from the MGG~$G$.
   Clearly $\mathcal{H}_t \cap \mathcal{K}_t \subseteq \mathcal{H}'_t \cap \mathcal{K}_t$. 
   Therefore,
   \begin{eqnarray}
      \pr{\mathcal{H}_t} 
      &\leq& \pr{\mathcal{H}'_t \cap \mathcal{D}_t \cap \mathcal{K}_t \cap \mathcal{E}_t} + \pr{\mathcal{D}_t^\compl} + \pr{\mathcal{K}_t^\compl} + \pr{\mathcal{E}_t^\compl}\nonumber\\
      &\leq& \prcond{\mathcal{H}'_t \cap \mathcal{D}_t}{\mathcal{E}_t} + \pr{\mathcal{D}_t^\compl} + \pr{\mathcal{K}_t^\compl} + \pr{\mathcal{E}_t^\compl}.
      \label{eq:percinparts}
   \end{eqnarray}
}
Note that we use $\mathcal{K}_t$ only to replace $\mathcal{H}_t$ by
$\mathcal{H}'_t$ in~(\ref{eq:percinparts});
this helps to control the dependencies among time steps, since $\mathcal{H}'_t$ 
is an event restricted to the cubes $S_i$
while $\mathcal{H}_t$ is an event over the whole of~$\mathbb{R}^d$.
We use $\mathcal{E}_t$ only to ensure that $S_i\subset Q_L$, which allows
us to focus on the portion of~$G$ inside~$Q_L$.  Note that $\mathcal{E}_t$
is independent of~$G$, so this conditioning does not affect~$G$.

Now we set $\Delta = \lceil C^2 \ell^2/s^2\rceil$, where $C$ is the constant in the definition of $L$. 
The main step in our proof is the lemma below.
\begin{lemma}\label{lem:percolationlemma}
   Let $\lambda > \lambda_\critperc$ and $\xi>0$ be such that $(1-\xi)^2\lambda >\lambda_\critperc$.
   Let $C$ be large enough in the definition of $L$ and $\Delta$.
   There exist constants $c=c(d)$ and $t_0=t_0(d)$ such that, for all $t\geq t_0$, we have
   $$
      \prcond{ {\mathcal{H}}'_{t} \cap  \mathcal{D}_t}{\mathcal{E}_t} 
      \leq \exp\left(-c t/\Delta\right).
   $$
\end{lemma}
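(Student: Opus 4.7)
The plan is to use Proposition~\ref{pro:coupling} iteratively to couple the MGG with fresh supercritical Poisson point processes $\Xi_1,\Xi_2,\ldots$ at the sampled time steps, and then to exploit supercritical percolation to show that at each step the probability that $u$ fails to join a crossing component is bounded above by a constant $1-q<1$. Because $H'_{\Delta i}$ is decreasing in the node set, $H'_{\Delta i}$ will imply the analogous event $\tilde H_i$ defined with $\Xi_i$ in place of $\Pi_{\Delta i}$ whenever the coupling succeeds; combined with mutual independence of the $\Xi_i$'s this yields the required exponential product bound.

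\textbf{Execution.} First I would condition on the trajectory $X=(x_0,x_\Delta,\ldots)$ of $u$ and on the random shifts defining the $S_{\Delta i}$; all are independent of $G$, and after conditioning each $x_{\Delta i}$ is uniform in a fixed cube $S_{\Delta i}\subset Q_L$. On the event $\mathcal{D}_t$, the process $\Pi_{\Delta(i-1)}$ satisfies the hypothesis of Proposition~\ref{pro:coupling} on $Q_{2L}$ with $\beta=(1-\xi)\lambda$ and $\epsilon=\xi$. The choice $\Delta=\lceil C^2\ell^2/s^2\rceil$ for $C$ large enough (depending on $\xi$ and $c_1$) satisfies the proposition's lower bound on $\Delta$, and $K-c_2 s\sqrt{\Delta\log\xi^{-1}}=2L-O(\ell)\ge L$ guarantees $S_{\Delta i}\subset Q_{K'}$ once $\ell=o(L)$. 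Using independent random bits for each application, the proposition yields jointly independent Poisson processes $\Xi_i$ of intensity $(1-\xi)^2\lambda>\lambda_\critperc$, each independent of the history up to time $\Delta(i-1)$, satisfying $\Xi_i\subseteq\Pi_{\Delta i}$ inside $Q_L$ except on a bad event $C_i^\compl$ of probability at most $p_{\mathrm{coup}}:=(2L/\ell)^d\exp(-c_3\xi^2(1-\xi)\lambda\ell^d)$.

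\textbf{Single-step estimate.} The crucial quantitative input is a supercritical percolation bound: for $\Xi_i$ of intensity $(1-\xi)^2\lambda>\lambda_\critperc$ and a cube $S$ of side $L/3$ with $L\to\infty$, a uniformly chosen point of $S$ lies within distance $r$ of some node of a crossing component of $S$ in $\Xi_i$ with probability at least a constant $q=q(\lambda,\xi,d)>0$. I would obtain this by combining the Penrose--Pisztora estimate (as already used in Lemma~\ref{lem:pisztora}), which with probability $1-\exp(-\Omega(L^{d-1}))$ places every crossing component of $S$ inside the unique infinite cluster of $\Xi_i$, with the standard fact that the infinite cluster has positive node-density $\theta=\theta((1-\xi)^2\lambda)>0$, so that its $r$-neighborhood occupies a positive fraction of the volume of $\mathbb{R}^d$. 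Averaging over the uniform location of $x_{\Delta i}$ in $S_{\Delta i}$ then gives $\prcond{\tilde H_i}{X}\le 1-q$.

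\textbf{Combining.} By monotonicity of $H'_{\Delta i}$, on $C_i$ the event $H'_{\Delta i}$ is contained in $\tilde H_i$, since $\Xi_i\subseteq\Pi_{\Delta i}$ implies that any crossing component in $\Xi_i$ is a subset of a crossing component in $\Pi_{\Delta i}$. Since the $\Xi_i$'s are independent of each other and of $X$,
\[
\prcond{\bigcap_i \tilde H_i}{X}\le (1-q)^{t/\Delta},
\]
and a union bound over the $t/\Delta$ coupling failures yields
\[
\prcond{\mathcal{H}'_t\cap\mathcal{D}_t}{\mathcal{E}_t}\le(1-q)^{t/\Delta}+\frac{t}{\Delta}\,p_{\mathrm{coup}}\le\exp(-ct/\Delta)
\]
for a constant $c=c(d)>0$, provided $\ell$ is at least a large enough constant so that $p_{\mathrm{coup}}$ is dominated by the first term (this is consistent with the later optimization of $\ell$ in Theorem~\ref{thm:percolation}). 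The hardest step is the supercritical percolation estimate giving $\pr{\tilde H_i}\le 1-q$: one must transfer the positive-density and uniqueness properties of the infinite cluster in $\mathbb{R}^d$ into a uniform lower bound on the chance that a random point of a large finite cube is within $r$ of a genuine crossing component, while simultaneously handling boundary effects.
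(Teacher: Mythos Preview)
Your proposal follows essentially the same route as the paper: iterate Proposition~\ref{pro:coupling} to produce fresh supercritical Poisson processes at the sampled times, use monotonicity of $H'_{\Delta i}$ to pass to these processes, invoke Penrose--Pisztora to get a uniform constant probability that $u$ lies in a crossing component, and multiply.

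One bookkeeping point deserves attention. You handle the coupling failures with an \emph{additive} union bound at the end, obtaining
\[
   (1-q)^{t/\Delta} + \frac{t}{\Delta}\,p_{\mathrm{coup}},
\]
and then claim this is $\le \exp(-ct/\Delta)$ ``provided $\ell$ is at least a large enough constant''. That is not quite right: with $\ell$ merely $\omega(1)$ (the only standing assumption at this point in the paper), the second term need not be $\exp(-\Omega(t/\Delta))$; it is only that small once $\ell^d\gtrsim t/\Delta$, which is the later optimized choice. The paper avoids this by absorbing the coupling error \emph{multiplicatively} into each factor via the chain rule
\[
   \prcond{\mathcal H'_t\cap\mathcal D_t}{\mathcal E_t}
   \le \prod_{i}\prcond{H'_{\Delta i}}{\mathcal H'_{\Delta(i-1)}\cap\mathcal D_{\Delta(i-1)}\cap\mathcal E_t}
   \le \prod_i\bigl[(1-q)+p_{\mathrm{coup}}+\exp(-c''L^{d-1})\bigr],
\]
so that each factor is $<1$ as soon as $\ell,L\to\infty$, yielding $\exp(-ct/\Delta)$ for all such~$\ell$. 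Your argument is easily repaired in the same way (replace the global union bound by a per-step bound $\pr{H'_{\Delta i}\cap D_{\Delta(i-1)}\mid\text{history}}\le \pr{\tilde H_i}+p_{\mathrm{coup}}$), but as written it proves a slightly weaker statement than the lemma claims.
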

\begin{proof}
   We start by writing
   \begin{equation}
      \prcond{ \mathcal{H}'_{t} \cap  \mathcal{D}_t}{\mathcal{E}_t}
      \leq \prod_{i=0}^{t/\Delta-1} \mathbf{Pr}[H'_{\Delta i}\mid \mathcal{H}'_{\Delta(i-1)} \cap \mathcal{D}_{\Delta(i-1)} \cap \mathcal{E}_t].
      \label{eq:product}
   \end{equation}
   (Here, for notational convenience, we assume that $\mathcal{H}'_{-\Delta} \cap \mathcal{D}_{-\Delta}\cap\mathcal{E}_t=\mathcal{E}_t$.)
   
   We now derive an upper bound for 
   $\mathbf{Pr}[H'_{\Delta i} \mid \mathcal{H}'_{\Delta(i-1)} \cap \mathcal{D}_{\Delta(i-1)} \cap \mathcal{E}_t]$.
   We start with a high level overview of the proof.
   Let $\Xi_{\Delta (i-1)}$ be the (not necessarily Poisson) point process obtained from the nodes of 
   $\Pi_{\Delta (i-1)}$ (the MGG at time $\Delta(i-1)$) under
   the condition $\mathcal{H}'_{\Delta(i-1)} \cap \mathcal{D}_{\Delta(i-1)}$.
   Note that $\Xi_{\Delta(i-1)}$ is conditioned only on events that occur between time $0$ and 
   time $\Delta (i-1)$;
   therefore, the motion of the nodes of $\Xi_{\Delta (i-1)}$ from time 
   $\Delta (i-1)$ to $\Delta i$ is independent of the condition. 
   Since all cells are assumed dense at time $\Delta(i-1)$, using Proposition~\ref{pro:coupling}
   we can construct an {\it independent\/} Poisson point process 
   $\Xi'_{\Delta (i-1)}$ and couple it with 
   $\Xi_{\Delta (i-1)}$ so that at time $\Delta i$ the nodes of $\Xi'_{\Delta i}$ in $Q_L$
   are a subset of the nodes of $\Xi_{\Delta i}$.
   Moreover, we can ensure that $\Xi'_{\Delta i}$ has intensity larger than $\lambda_\critperc$ in $Q_L$, and thus conclude that 
   $u$ will belong to a crossing component of~$S_{\Delta i}$ with constant probability. 
   Using this, we can upper bound each term of the product
   in~(\ref{eq:product}) by a constant strictly smaller than~$1$, which gives
   $\prcond{ \mathcal{H}'_{t} \cap  \mathcal{D}_t}{\mathcal{E}_t} \leq \exp(-c t/\Delta)$ for some constant $c=c(d)$.
   
   Turning now to the details, we can invoke Proposition~\ref{pro:coupling} with 
   $\beta=(1-\xi)\lambda$, $\epsilon=\xi$, $K=2L$, and $K'=L$ to
   obtain that, conditioned on $\mathcal{H}'_{\Delta(i-1)} \cap \mathcal{D}_{\Delta(i-1)}$, 
   at time $\Delta i$ the nodes of the MGG in $Q_L$ contain a fresh Poisson point process
   with intensity $(1-\xi)^2\lambda>\lambda_\critperc$ with probability 
   at least $1-\exp\left(-c'\xi^2 \lambda \ell^d\right)$, for some constant $c'$. 
   Since $H'_{\Delta i}$ is a decreasing event\footnote{Note that we defined
   $H'_{\Delta i}$ in terms of crossing components precisely in order to make it
   a decreasing event; otherwise, we could have defined it in terms of the largest 
   connected component of~$S_{\Delta i}$.}, 
   if we define $H''_{\Delta i}$ as the event $H'_{\Delta i}$ restricted to the 
   nodes of the fresh Poisson point process, then $H'_{\Delta i} \subseteq H''_{\Delta i}$ and 
   $$
      \prcond{ \mathcal{H}'_{\Delta i}}{\mathcal{H}'_{\Delta(i-1)} \cap \mathcal{D}_{\Delta(i-1)} \cap \mathcal{E}_t}
      \leq \pr{ \mathcal{H}''_{\Delta i}} + \exp\left(-c'\xi^2 \lambda \ell^d\right),
   $$
   as $\mathcal{H}''_{\Delta i}$ does not depend on the condition. 
   Since the intensity of the fresh Poisson point 
   process is $(1-\xi)^2\lambda > \lambda_\critperc$, \cite[Theorem~1]{PenPis} implies that,
   with probability $1-\exp(-c'' L^{d-1})$ for some constant~$c''$, 
   a constant fraction of the volume of $S_{\Delta i}$ is within distance $r$ of at least one node 
   in a crossing component of~$S_{\Delta i}$.
   Since at time~$\Delta i$, $u$ is located uniformly at random inside $S_{\Delta i}$, 
   $u$ belongs to a crossing component of $S_{\Delta i}$ with probability at least
   $c'''>0$, where $c'''$ is a 
   constant. Hence we have
   $$
      \prcond{ \mathcal{H}'_{t} \cap  \mathcal{D}_t}{\mathcal{E}_t}
      \leq \prod_{i=0}^{t/\Delta-1} \left[1-c'''+\exp(-c'' L^{d-1})
         + \exp\left(-c'\xi^2 \lambda \ell^d\right)\right].
   $$
   Since $L$ and $\ell$ go to infinity with~$t$, for $t$ sufficiently large each factor
   in the above product can be made strictly 
   smaller than~$1$, which concludes the proof of Lemma~\ref{lem:percolationlemma}. 
\end{proof}

Finally, we plug Lemmas~\ref{lem:density}--\ref{lem:percolationlemma} into 
(\ref{eq:percinparts}) and obtain the following upper bound on $\pr{\mathcal{H}_t}$:  $$
   \pr{\tperc \geq t} \le \pr{\mathcal{H}_t} \leq \exp\left(-c t/\Delta\right) + 
               \exp\left(-c \ell^d\right) +
                              \exp\left(-c t\right) +
                              \exp(-ct^{d-1}).  $$
(Here $c=c(d)$ is a generic constant.)  In order to minimize this
upper bound we choose~$\ell$ so that $\ell^d=\Theta(t/\Delta)$, 
which yields
$$
   \pr{\tperc \geq t}  \leq \exp\left(-c t^{\frac{d}{d+2}}\right)
$$
for all sufficiently large~$t$, where $c$ is a constant depending on~$\lambda$,~$s$, and~$d$.
This completes the proof of Theorem~\ref{thm:percolation}.  
It remains to go back
and prove Proposition~\ref{pro:coupling}.

\subsubsection*{Proof of Proposition~\ref{pro:coupling}}
We will construct $\Xi$ via three Poisson point processes.
We start by defining $\Xi'_0$ as a Poisson point process over $Q_{K}$ 
with intensity $(1-\epsilon/2)\beta$.
Recall that $\Pi_0$ has at least $\beta \ell^d$ nodes in each cell of $Q_K$.
Then, in any fixed cell,
$\Xi'_0$ has fewer nodes than $\Pi_0$ if 
$\Xi'_0$ has less than $\beta\ell^d$ nodes in that cell, which by a
standard Chernoff bound (cf.\ Lemma~\ref{lem:cbpoisson}) occurs 
with probability larger than 
$1-\exp\left(-\frac{{\epsilon'}^2(1-\epsilon/2)\beta\ell^d}{2}(1-{\epsilon'}/3)\right)$
for $\epsilon'$ such that $(1+\epsilon')(1-\epsilon/2)=1$. Since $\epsilon \in (0,1)$ we 
have $\epsilon'\in(\epsilon/2,1)$, and the probability above can be bounded below by
$1-\exp\left(-c\epsilon^2\beta\ell^d\right)$ for some constant $c=c(d)$.
Let $\{\Xi'_0 \preceq \Pi_0\}$ be the event that $\Xi'_0$ has fewer nodes
than $\Pi_0$ in every cell of $Q_{K}$. 
Using the union bound over cells we obtain
\begin{equation}
   \mathbf{Pr}[\Xi'_0 \preceq \Pi_0] \geq 1- \frac{K^d}{\ell^d}\exp(-c\epsilon^2\beta\ell^d).
   \label{eq:cbcoupling}
\end{equation}

If $\{\Xi'_0 \preceq \Pi_0\}$ holds, then we can map each node of 
$\Xi'_0$ to a unique node of $\Pi_0$ in the same cell. We will now show that we can couple
the motion of the nodes in $\Xi'_0$ with the motion of their respective pairs in 
$\Pi_0$ so that the probability that an arbitrary pair is at the same location at time $\Delta$
is sufficiently large. 

To describe the coupling, let
$v'$ be a node from $\Xi'_0$ located at $y' \in Q_{K}$, and let $v$ be the pair of 
$v'$ in $\Pi_0$. Let $y$ be the location of $v$ in $Q_{K}$, and note that since $v$ and 
$v'$ belong to the same cell we have $\|y-y'\|_2 \leq \sqrt{d}\ell$. 
We will construct a function $g(z)$ that is smaller than the densities
for the motions of $v$ and $v'$ to the location $y'+z$, uniformly for $z\in \mathbb{R}^d$. 
That is, 
\begin{equation}
   g(z) \leq \min\{f_{\Delta}(y',y'+z),f_{\Delta}(y,y'+z)\}
   = \frac{1}{(2\pi s^2\Delta)^{d/2}} \exp\left(-\frac{\max\{\|z\|_2^2,\|y'+z-y\|_2^2\}}{2s^2\Delta}\right)
   \label{eq:constraintg}
\end{equation}
for all $z\in \mathbb{R}^d$.  

We set 
\begin{equation}
   g(z) = \frac{1}{(2\pi s^2\Delta)^{d/2}} \exp\left(-\frac{(\|z\|_2+\sqrt{d}\ell)^2}{2s^2\Delta}\right).
   \label{eq:g}
\end{equation}
Note that this definition satisfies~(\ref{eq:constraintg}) since by the triangle inequality 
$\|y'+z-y\|_2 \leq \|y'-y\|_2+\|z\|_2$ and $\|y'-y\|_2 \leq \sqrt{d}\ell$. 
Define $\psi= 1-\int_{\mathbb{R}^d} g(z)dz$. 
Then, with probability $1-\psi$ we 
can use the density function $\frac{g(z)}{1-\psi}$ to sample a single location 
for the position of both $v$ and $v'$ at time $\Delta$,
and then set $\Xi''_0$ to be the Poisson point process with 
intensity $(1-\psi)(1-\epsilon/2)\beta$
obtained by {\it thinning} $\Xi'_0$ (i.e., deleting each node
of $\Xi'_0$ with probability $\psi$). 
At this step we have crucially used the fact that the function
$g(z)$ in (\ref{eq:g}) is oblivious of the location of $v$ and, consequently, is independent of the 
point process $\Pi_0$. (If one were to use the maximal coupling suggested
by~(\ref{eq:constraintg}), then the thinning probability would depend on $\Pi_0$, and 
$\Xi''_0$ would not be a Poisson point process.)

Let $\Xi''_\Delta$ be obtained from $\Xi''_0$
after the nodes have moved 
according to the density function $\frac{g(z)}{1-\psi}$. 
Thus we are assured that the nodes of the Poisson point process $\Xi''_\Delta$ 
are a subset of the nodes of $\Pi_\Delta$ and are independent of the nodes of $\Pi_0$, where $\Pi_\Delta$ is 
obtained by letting the nodes of $\Pi_0$ move from time $0$ to time $\Delta$.

The next lemma shows that if $\Delta$ and $K-K'$ are large enough, 
then the integral of $g(z)$ inside the ball $B_{(K-K')/2}$ is larger than
$1-\epsilon/2$. (We are interested in the ball $B_{(K-K')/2}$ since for all $z\in Q_{K'}$ we have $z+B_{(K-K')/2} \subset Q_K$.)
\begin{lemma}\label{lem:psi}
   If $\Delta \geq c \frac{\ell^2}{s^2\epsilon^2}$ and 
   $K-K'\geq c' s \sqrt{\Delta \log \epsilon^{-1}}$ for large enough $c,c'$, we may ensure that
   $\int_{B_{(K-K')/2}} g(z) dz \geq 1-\epsilon/2$.
\end{lemma}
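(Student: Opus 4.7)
The plan is to decompose $1 - \int_{B_R} g(z)\,dz$, where $R := (K-K')/2$, as
\begin{equation*}
\psi \;+\; \int_{\mathbb{R}^d\setminus B_R} g(z)\,dz, \qquad \psi := 1 - \int_{\mathbb{R}^d} g(z)\,dz,
\end{equation*}
and show that each summand is at most $\epsilon/4$. Here $\psi$ captures the ``drift penalty'' coming from the $\sqrt{d}\ell$ shift in the exponent of $g$ and is to be absorbed by the hypothesis on $\Delta$; the tail term is to be absorbed by the hypothesis on $K-K'$.

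For the tail, I would use the pointwise inequality $g(z) \leq (2\pi s^2\Delta)^{-d/2}\exp(-\|z\|_2^2/(2s^2\Delta))$, immediate from $(\|z\|_2+\sqrt{d}\ell)^2 \geq \|z\|_2^2$, which says that $g$ is dominated by the density of an $N(0,s^2\Delta I_d)$ random vector. The tail then reduces to $\pr{\|N(0,s^2\Delta I_d)\|_2 \geq R}$, which by a standard $d$-dimensional Gaussian tail estimate (e.g.\ a union bound over coordinates combined with Lemma~\ref{lem:cbnormal}) is at most $\exp(-\Omega(R^2/(s^2\Delta)))$ once $R = \Omega(s\sqrt{\Delta})$. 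The hypothesis $K-K' \geq c_2 s\sqrt{\Delta\log \epsilon^{-1}}$ then forces this quantity to be at most $\epsilon/4$ for $c_2$ sufficiently large.

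For $\psi$, I would rescale via $u := z/(s\sqrt{\Delta})$, setting $a := \sqrt{d}\ell/(s\sqrt{\Delta})$. Passing to spherical coordinates and substituting $t = \|u\|_2 + a$ yields
\begin{equation*}
1-\psi \;=\; \frac{1}{(2\pi)^{d/2}}\int_{\mathbb{R}^d} e^{-(\|u\|_2+a)^2/2}\,du \;=\; \frac{\omega_{d-1}}{(2\pi)^{d/2}}\int_a^\infty (t-a)^{d-1} e^{-t^2/2}\,dt,
\end{equation*}
where $\omega_{d-1}$ is the surface area of the unit $(d-1)$-sphere. Subtracting from the normalization identity $1 = \omega_{d-1}(2\pi)^{-d/2}\int_0^\infty t^{d-1}e^{-t^2/2}\,dt$ splits $\psi$ into $\int_0^a t^{d-1}e^{-t^2/2}\,dt$, trivially at most $a^d/d = O(a)$ for $a \leq 1$, plus $\int_a^\infty \bigl[t^{d-1}-(t-a)^{d-1}\bigr]e^{-t^2/2}\,dt$, which by the mean value theorem bound $t^{d-1}-(t-a)^{d-1} \leq (d-1)a\,t^{d-2}$ is also $O(a)$. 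Hence $\psi \leq C_d\, a = C_d\sqrt{d}\,\ell/(s\sqrt{\Delta})$, and the hypothesis $\Delta \geq c_1\ell^2/(s^2\epsilon^2)$ forces $\psi \leq \epsilon/4$ for $c_1$ large.

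The only (and fairly modest) obstacle is arranging the universal constants $c_1$ and $c_2$ large enough to simultaneously absorb the dimension-dependent prefactors appearing in both bounds; no deeper conceptual issue arises, since both estimates reduce to standard Gaussian tail computations.
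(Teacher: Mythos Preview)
Your proof is correct and follows essentially the same route as the paper: both split the deficit $1-\int_{B_R} g$ into a ``shift penalty'' of order $\ell/(s\sqrt{\Delta})$ (your $\psi$; the paper's $1-\frac{a}{(2\pi)^{d/2}}h(\delta)$ after the same rescaling $\delta=a=\sqrt{d}\,\ell/(s\sqrt{\Delta})$) and a Gaussian tail beyond radius $R$, bounding each by $\epsilon/4$ via the two hypotheses respectively. The only cosmetic difference is that the paper gets the $O(a)$ bound on the shift term by a one-line convexity/Taylor argument $h(\delta)\ge h(0)+\delta h'(0)$, whereas you decompose $\psi$ explicitly into $\int_0^a$ and $\int_a^\infty$ pieces; the tail is handled identically modulo the specific Gaussian tail inequality invoked.
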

\begin{proof}
   Since $g(z)$ depends on $z$ only via $\|z\|_2$, we integrate over 
   $\rho=\|z\|_2$ and 
   let $a \rho^{d-1}$ be the surface area of~$B_\rho$, which gives
   $$
      \int_{B_{(K-K')/2}} g(z) dz
      = \int_{0}^{(K-K')/2} \frac{a \rho^{d-1}}{(2\pi s^2\Delta)^{d/2}} \exp\left(-\frac{(\rho+\sqrt{d}\ell)^2}{2s^2\Delta}\right) d\rho.
   $$
   Now we change variables to $\rho'=\frac{\rho+\sqrt{d}\ell}{s\sqrt{\Delta}}$ and set
   $\delta=\frac{\sqrt{d}\ell}{s\sqrt{\Delta}}=\epsilon\sqrt{d/c}$ and $K''=\frac{K-K'}{2s\sqrt{\Delta}}+\delta$
   to obtain
   \begin{equation}
      \int_{B_{(K-K')/2}} g(z) dz
      = \frac{a}{(2\pi)^{d/2}}\int_{\delta}^{K''} \left(\rho'-\delta\right)^{d-1} \exp(-{\rho'}^2/2) d\rho'.
      \label{eq:psifortaylor}
   \end{equation}
   Let $h(\delta) = \int_{\delta}^\infty (\rho'-\delta)^{d-1} \exp(-{\rho'}^2/2)d\rho'$.
   We will apply the Taylor expansion of $h(\delta)$ around $\delta=0$.
   Note that $\frac{a}{(2\pi)^{d/2}} h(0) = 1$, and 
   the derivative of $h(\delta)$ is
   $$
      h'(\delta)
      = -(d-1)\int_{\delta}^{\infty} \left(\rho'-\delta\right)^{d-2} \exp(-{\rho'}^2/2) d\rho'.
   $$
   In particular, the derivative increases with $\delta$, and by Taylor's theorem, 
   $h(\delta) \geq h(0) + \delta h'(0)$. Therefore, we have
   $$
      \frac{a}{(2\pi)^{d/2}} h(\delta)
      \geq \frac{a}{(2\pi)^{d/2}} \left(h(0) + \delta h'(0)\right)
      = 1-\delta \frac{ah'(0)}{(2\pi)^{d/2}}.
   $$
   Note that $\delta\frac{ah'(0)}{(2\pi)^{d/2}}$ depends on the dimension only and can, 
   for example, be made 
   smaller than $\epsilon/4$ for sufficiently large $c$. Using equation~(\ref{eq:psifortaylor}) we get
      \begin{equation}
      \int_{B_{(K-K')/2}} g(z) dz
      \geq 1-\epsilon/4 - \frac{a}{(2\pi)^{d/2}}\int_{K''}^{\infty} \left(\rho'-\delta\right)^{d-1} \exp(-{\rho'}^2/2) d\rho'.
      \label{eq:almostthere}
   \end{equation}
   Now note that $\rho'^{d-1}\exp(-{\rho'}^2/2) \le c''\exp(-{\rho'}^2/3)$ uniformly for $\rho'\in[0,\infty)$,
   where $c''$ is a constant depending only on~$d$.  Thus we have
   \begin{equation}
      \int_{K''}^\infty \frac{a}{(2\pi)^{d/2}} (\rho'-\delta)^{d-1}\exp(-{\rho'}^2/2)d\rho'
            \leq  \frac{c''a}{(2\pi)^{d/2}} \int_{K''}^\infty\exp(-{\rho'}^2/3)d\rho'\nonumber\\
      \leq c''' \exp(-{K''}^2/3), \label{eq:lastg}
   \end{equation}
   for a constant $c'''$ depending only on~$d$.  
   For sufficiently large $c'$ (and thus $K''$), the right hand side in (\ref{eq:lastg}) can be made
   smaller than $\epsilon/4$.
   Plugging this into~(\ref{eq:almostthere}) yields the lemma.
\end{proof}

When 
$\{\Xi'_0 \preceq \Pi_0\}$ holds, 
$\Xi''_{\Delta}$ consists of a subset of the nodes of $\Pi_\Delta$.
Note that $\Xi''_\Delta$ is a \emph{non-homogeneous} Poisson point process over $Q_K$. 
It remains to show that the intensity of $\Xi''_\Delta$ is strictly larger than $(1-\epsilon)\beta$ in $Q_{K'}$ so 
that $\Xi$ can be obtained from $\Xi''_\Delta$ via thinning; since $\Xi''_\Delta$ is independent of 
$\Pi_0$, so is $\Xi$.

For $z \in \mathbb{R}^d$, let $\mu(z)$ be the intensity of $\Xi''_{\Delta}$. 
Since $\Xi''_0$ has no node outside $Q_{K}$, we obtain for any $z\in Q_{K'}$,
$$
   \mu(z) 
   \geq (1-\psi)(1-\epsilon/2)\beta \int_{z+B_{(K-K')/2}} \frac{g(z-x)}{1-\psi} dx
   = (1-\epsilon/2)\beta \int_{B_{(K-K')/2}}g(x) dx,
$$
where the inequality follows since $z+B_{(K-K')/2} \subset Q_{K}$ for all $z \in Q_{K'}$. 
From Lemma~\ref{lem:psi}, we have
$\int_{B_{(K-K')/2}} g(x) dx \geq 1-\epsilon/2$. We then obtain 
$\mu(z) \geq (1-\epsilon/2)^2\beta \geq (1-\epsilon)\beta$, which is the intensity of $\Xi$.
Therefore, when $\{\Xi'_0 \preceq \Pi_0\}$ holds, which occurs with probability given by (\ref{eq:cbcoupling}),
the nodes of $\Xi$ are a subset of the nodes of $\Pi_\Delta$, which completes the proof of Proposition~\ref{pro:coupling}.

\section{Broadcast time}
\label{sec:broadcast}
In this section we use Theorem~\ref{thm:percolation} to prove Corollary~\ref{cor:broadcast} for 
a finite mobile network of volume $n/\lambda$. 

We may relate the MGG model on the torus to a model on~$\rset^d$ as follows.  
Let $S_n$ denote the cube $Q_{(n/\lambda)^{1/d}}$.  
The initial distribution of the nodes is a Poisson point process over~$\rset^d$
with intensity~$\lambda$ on~$S_n$ and zero elsewhere.
We allow the nodes to move according to Brownian motion over~$\rset^d$
as usual, and at each time step we project the location of each node onto~$S_n$
in the obvious way.
   
Now let $t= C \log^{1+2/d}n$ for some sufficiently large constant $C=C(d)$. 
The proof proceeds in three stages.  First, we show that for any fixed $i\in[0,t-1]$,
the giant component of $G_i$ has at least one node in common with the giant 
component of~$G_{i+1}$. This means that, once the message has reached the giant
component, it will reach any node~$v$ as soon as~$v$ itself belongs to the giant
component.  Thus we can bound the broadcast time by (twice) the time until
all nodes of~$G$ have been in the giant component.

In order to prove the above claim, let $\epsilon>0$ be sufficiently small so
that $(1-\epsilon)\lambda>\lambda_\critperc$. We use 
the thinning property to split $\Pi_i$ into two Poisson point processes, 
$\Pi_i'$ and~$\Pi_i''$, with intensities $(1-\epsilon)\lambda$ and $\epsilon \lambda$ 
respectively. Let $G_i'$ and $G_{i+1}'$ be the RGGs induced by~$\Pi'_i$ and $\Pi'_{i+1}$
respectively.  Then with probability $1-e^{-\Theta(n^{1-1/d})}$ both $G_i'$ and $G_{i+1}'$ 
contain a giant component~\cite{PenPis}.   We show that 
at least one node from $\Pi_i''$ belongs to both giant components. 
For any node $v$ of $\Pi_i''$, the probability that $v$
belongs to the giant component of $G_i'$ is larger than some constant $c=c(d)$. 
Moreover, using the FKG inequality we can show that $v$ belongs to the giant 
components of both $G_i'$ and $G_{i+1}'$ with probability larger than~$c^2$. 
Therefore, using the thinning property again, we can show that the nodes from~$\Pi_i''$ 
that belong to the giant components of both $G_i'$ and $G_{i+1}'$ form a Poisson point 
process with intensity $\epsilon \lambda c^2$, since $c$ does not depend on $\Pi_i''$. 
Hence, there will be at 
least one such node inside~$S_n$ with probability $1-e^{-\epsilon c^2 n}$, and this 
stage is concluded by taking the union bound over time steps~$i$.
   
Our ultimate goal is to show that if  $\tperc \leq t$ then all nodes of $G$ receive the 
message being broadcast within $2t$ steps w.h.p.
We proceed to the second stage of the proof, and show that the tail bound on $\tperc$
from Theorem~\ref{thm:percolation} also holds when applied to the finite region~$S_n$
defined above.  Note that all the derivations in the proof of Theorem~\ref{thm:percolation} 
were restricted to the cube~$Q_{2L}$, where $L$ was defined near the beginning of
Section~\ref{sec:percolation}.  Therefore, it is enough to show that $Q_{2L}$ is contained 
inside~$S_n$ (so that the toroidal boundary conditions do not affect the result).  But this
holds for all sufficiently large~$n$ since $L=O(t)=O(\log^{1+2/d}n)$ while $S_n$ has 
side-length $(n/\lambda)^{1/d}$.
   
The last stage of the proof consists of showing that adding a node~$u$ at the origin and 
calculating its percolation time (as we did in Theorem~\ref{thm:percolation}) is equivalent 
to calculating the percolation time of an arbitrary node of $G$. 
Note that, by a Chernoff bound, $G$ has at most $(1+\delta)n$ nodes with probability 
larger than $1-e^{-\Omega(n)}$ for any fixed $\delta>0$.  These nodes are indistinguishable, 
so letting $\rho$ be the probability that an arbitrary node has percolation time at 
least~$t$, we can use the union bound to deduce that this applies to {\it at least one\/}
node in $G$ with probability at most $(1+\delta)n\rho$.  Let $v$ be an arbitrary node. 
In order to relate $\rho$ to the result of Theorem~\ref{thm:percolation}, 
we can use translation invariance and assume that $v$ is at the origin. 
Then, by the ``Palm theory" of Poisson point processes~\cite{SKM}, $\rho$~is equivalent
to the tail of the percolation time for a node added at the origin, which is precisely
$\pr{\tperc\ge t}$.  Thus finally, using Theorem~\ref{thm:percolation} we get
$\rho \leq \exp(-c t^\frac{d}{d+2})$, which can be made $o(1/n)$ 
by setting $C$ sufficiently large. This completes the proof of Corollary~\ref{cor:broadcast}.
   
\par\bigskip\noindent
{\bf Remark:}
It is easy to see that the above result also holds in the case where the MGG has 
\emph{exactly} $n$ nodes. 
The proof above shows that, by setting $C$ large enough, we can ensure 
$\pr{\tbroad \geq t}=o(1/n)$ for the given value of~$t$. 
Also, it is well known that a Poisson random variable with mean~$n$ takes the value~$n$
with probability $p=\Theta(1/\sqrt{n})$. Therefore, for a MGG with exactly $n$ nodes, 
we have $\Pr[\tbroad < t]=\frac{p-o(1/n)}{p}=1-o(1/\sqrt{n})$.

\section{Some open questions}
We hope our work will promote further mathematical research on mobile networks. 
Some natural open questions related to our results include the following:

\begin{enumerate}\addtolength{\itemsep}{-0.1in}
   \item What is the tight asymptotic behavior of the tail of the percolation time~$\tperc$?
We conjecture that the upper bound of Theorem~\ref{thm:percolation} can be tightened to match
the lower bounds provided by $\pr{\tdet \geq t}$ in Theorem~\ref{thm:detection}.

   \item Let the \emph{coverage time} $\tcov$ be the time until {\it all\/} points of a finite region~$S$
are detected by the MGG.  How does $\tcov$ behave?  

\item Let $T_{\rm comm}$ denote the time until a specific node~$u$ is able to send a message
to a target node~$v$, assuming that the other nodes cooperate maximally to achieve this following
the broadcast protocol of Section~\ref{sec:broadcast}.   Since $T_{\rm comm}$ is bounded
above by the time until both~$u$ and~$v$ simultaneously belong to the giant component, 
following along the lines of the proof of Theorem~\ref{thm:percolation} for the tail of 
$\tperc$ we can show that
$\pr{T_{\rm comm} \geq t }=\exp(-\Omega(t^\frac{d}{d+2}))$.
Can one show a substantially better upper bound on the tail
of~$T_{\rm comm}$?
   
   \item What is the broadest (realistic) class of mobility models (i.e., beyond the Brownian
motion we consider in this paper) for which our results still apply?
   
   \item In our result on the broadcast time (Corollary~\ref{cor:broadcast}), we assumed
that messages can travel instantaneously throughout each connected component, which is
reasonable in many applications (where, e.g., transmission through the air is effectively
instantaneous in comparison to the motion of the nodes).  How is this 
result affected by the assumption that messages may only travel a limited number of hops 
at each time step?
   
\end{enumerate}
\section*{Acknowledgments}
We thank Yuval Peres for helpful input on continuum percolation,
and in particular for pointing out to us the connection between the detection problem
and the Wiener sausage.  We also thank David Tse for useful discussions on
mobile wireless networks.

\appendix

\section{Standard large deviation results}

We use the following standard Chernoff bounds and large deviation results.

\begin{lemma}[Chernoff bound for Poisson]\label{lem:cbpoisson}
   Let $P$ be a Poisson random variable with mean $\lambda$. Then, for any 
   $0<\epsilon<1$,
   $$
      \pr{P \geq (1+\epsilon) \lambda} \leq  \exp\left(-\frac{\lambda \epsilon^2}{2}(1-\epsilon/3)\right),
   $$
   and
   $$
      \pr{P \leq (1-\epsilon) \lambda} \leq  \exp\left(-\frac{\lambda \epsilon^2}{2}\right).
   $$
\end{lemma}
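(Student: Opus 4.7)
The plan is to prove both bounds via the standard Chernoff method: apply Markov's inequality to the moment generating function of $P$, optimize the free parameter, and then simplify the resulting rate function by a Taylor expansion argument.

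For the upper tail, I would begin from the identity $\E{e^{sP}}=\exp(\lambda(e^s-1))$, which is the Poisson MGF, valid for all $s\in\mathbb{R}$. Markov's inequality applied to $e^{sP}$ with $s>0$ gives
\[
   \pr{P\ge (1+\epsilon)\lambda}\le \exp\bigl(\lambda(e^s-1)-s(1+\epsilon)\lambda\bigr).
\]
Differentiating in $s$ and setting the derivative to zero yields the optimal choice $s=\ln(1+\epsilon)>0$. Substituting back, the bound becomes $\exp\bigl(\lambda(\epsilon-(1+\epsilon)\ln(1+\epsilon))\bigr)$, so it suffices to show the analytic inequality
\[
   (1+\epsilon)\ln(1+\epsilon)-\epsilon\ \ge\ \tfrac{\epsilon^2}{2}\bigl(1-\tfrac{\epsilon}{3}\bigr)\qquad\text{for } 0<\epsilon<1.
\]
I would establish this by expanding $\ln(1+\epsilon)=\sum_{k\ge 1}(-1)^{k+1}\epsilon^k/k$, whence $(1+\epsilon)\ln(1+\epsilon)-\epsilon=\sum_{k\ge 2}(-1)^k\epsilon^k/(k(k-1))$, and showing that the remainder beyond the $\epsilon^2/2-\epsilon^3/6$ terms is nonnegative in the stated range (the series is alternating with decreasing magnitudes for $\epsilon<1$, so successive partial sums bracket the true value).

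For the lower tail I proceed symmetrically, using $s<0$: for $s>0$,
\[
   \pr{P\le (1-\epsilon)\lambda}\le \exp\bigl(\lambda(e^{-s}-1)+s(1-\epsilon)\lambda\bigr),
\]
and the optimum is $s=-\ln(1-\epsilon)>0$. The exponent becomes $\lambda\bigl(-\epsilon-(1-\epsilon)\ln(1-\epsilon)\bigr)$, and so the desired bound reduces to
\[
   \epsilon+(1-\epsilon)\ln(1-\epsilon)\ \ge\ \tfrac{\epsilon^2}{2}\qquad\text{for } 0<\epsilon<1.
\]
Here the power series expansion $\ln(1-\epsilon)=-\sum_{k\ge 1}\epsilon^k/k$ gives $\epsilon+(1-\epsilon)\ln(1-\epsilon)=\sum_{k\ge 2}\epsilon^k/(k(k-1))=\tfrac{\epsilon^2}{2}+\tfrac{\epsilon^3}{6}+\cdots$, which is manifestly $\ge \epsilon^2/2$ term-by-term since every coefficient is positive. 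This tail is actually cleaner than the upper one because no $(1-\epsilon/3)$ correction is needed.

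The only step requiring real attention is the upper-tail analytic inequality; once the series is written out the sign pattern does the work, but one must be careful that the estimate $1-\epsilon/3$ is tight enough. Everything else is essentially bookkeeping: MGF of a Poisson, Markov's inequality, and a one-variable optimization that is solved in closed form. I do not anticipate any genuine obstacle, so I would keep the write-up to a few lines for each tail after recording the MGF identity.
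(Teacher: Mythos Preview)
Your proof is correct; the paper states this lemma as a standard large-deviation result in the appendix without proof, and your argument via the Poisson MGF, Markov's inequality, optimization in $s$, and the series expansion of $(1\pm\epsilon)\ln(1\pm\epsilon)$ is exactly the standard derivation one would supply. The only place to be slightly careful is the upper-tail remainder $\sum_{k\ge 4}(-1)^k\epsilon^k/(k(k-1))\ge 0$, which your alternating-series argument handles for $0<\epsilon<1$.
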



\begin{lemma}[Large deviation for normal]\label{lem:cbnormal}
   Let $N$ be a Normal random variable with mean $0$ and variance $\sigma^2$. Then, for any $x\geq 0$,
   $$
      \pr{N \geq x} \leq \frac{\sigma}{\sqrt{2\pi} x} \exp\left(-\frac{x^2}{2\sigma^2}\right).
   $$
\end{lemma}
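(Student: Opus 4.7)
The plan is to reduce the tail probability to an elementary integral and then exploit the fact that the Gaussian density has an antiderivative after multiplication by a linear factor. By the standard normalization $N/\sigma$ is standard normal, so it suffices to handle the case $\sigma=1$ and then scale; alternatively one can work directly with $\sigma$ throughout. I will take the direct route.

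First I would write
\[
   \pr{N \geq x} = \int_x^\infty \frac{1}{\sqrt{2\pi}\,\sigma}\exp\!\left(-\frac{y^2}{2\sigma^2}\right)dy.
\]
The key trick is to notice that on the range $y \geq x \geq 0$ we have $y/x \geq 1$, so inserting this factor only increases the integrand:
\[
   \pr{N \geq x} \;\leq\; \frac{1}{x}\int_x^\infty \frac{y}{\sqrt{2\pi}\,\sigma}\exp\!\left(-\frac{y^2}{2\sigma^2}\right)dy.
\]
The point is that $\frac{d}{dy}\bigl[-\sigma^2\exp(-y^2/(2\sigma^2))\bigr] = y\exp(-y^2/(2\sigma^2))$, so the integral on the right is now explicit. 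Evaluating it yields
\[
   \pr{N \geq x} \;\leq\; \frac{\sigma}{\sqrt{2\pi}\,x}\exp\!\left(-\frac{x^2}{2\sigma^2}\right),
\]
which is precisely the claim.

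There is no real obstacle here; this is the classical Mill's ratio upper bound and the only minor care needed is to assume $x>0$ strictly in the displayed bound (the case $x=0$ must be treated separately, but the stated bound is vacuous there since the right-hand side is infinite, so one can either exclude it or note that the inequality is trivial). No probabilistic machinery beyond the definition of the normal density is required.
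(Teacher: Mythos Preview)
Your argument is correct: this is the classical Mill's ratio bound, obtained by inserting the factor $y/x\ge 1$ on the range of integration and then integrating the resulting exact differential. The paper itself does not supply a proof of this lemma---it is simply stated in the appendix as a standard large deviation result---so there is nothing to compare against; your proof is exactly the standard one-line justification one would give if asked.
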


\end{document}